  \newcommand{\basic}{{\sf Basic}}
  \newcommand{\tsn}{{\sf TS}_0}
  \newcommand{\utso}{{\sf UTS}_0}
  \newcommand{\pkf}{{\sf PKF}}
  \newcommand{\kf}{{\sf KF}}
  \newcommand{\bdm}{\sf BDM}
\newcommand{\T}{{\sf T}}
\newcommand{\lra}{\leftrightarrow}
\long\def\symbolfootnote[#1]#2{\begingroup%
\def\thefootnote{\fnsymbol{footnote}}\footnote[#1]{#2}\endgroup}
\newcommand{\ra}{\rightarrow}
\newcommand{\Ra}{\Rightarrow}
\newcommand{\mbb}{\mathbb}
\newcommand{\vphi}{\varphi}
\newcommand{\sth}{\;|\;}
\newcommand{\urc}{\urcorner}
\newcommand{\ulc}{\ulcorner}
\newcommand{\mc}{\mathcal}
\newcommand{\bigWedge}{\mbox{\smaller$\,\bigwedge\,$}}\newcommand{\bigVee}{\mbox{\smaller$\,\bigvee\,$}}
\newcommand{\sent}{{\sf Sent}_{\mc L}}
\newcommand{\pr}{{\sf Pr}}
\newcommand{\prt}{\pr^2}
\newcommand{\lt}{\mc L_{\T}}
\newcommand{\sentt}{{\sf Sent}_{\lt}}
\newcommand*\subdot[1]{\oalign{$#1$\cr\hfil.\hfil}}% 
\providecommand\dvee%
\providecommand\dneg%
\providecommand\dwedge%
\providecommand\drightarrow%
\providecommand\dnot%
\providecommand\dbot%
\providecommand\dforall%
\providecommand\dexists%
\newcommand{\tso}{{\sf TS}_0}
\newcommand{\rr}[1]{{\sf r}({#1})}
\newcommand{\fr}[1]{{\sf R}({#1})}
\newcommand{\rtwo}[1]{{\sf R}^2(#1)}
\theoremstyle{plain}
\newtheorem{dfn}{Definition}
\newtheorem{lemma}{Lemma}
\newtheorem{corollary}{Corollary}
\newtheorem{observation}{Observation}
\newtheorem{proposition}{Proposition}
\begin{document}	

\title{Iterated reflection over full disquotational truth}
\author{Martin Fischer \& Carlo Nicolai \& Leon Horsten }
\maketitle

\begin{abstract}
%\noindent Iterated reflection principles . Recently Horsten and Leigh \cite{hole15} have fruitfully applied reflection principle to classical theories of truth. They were able to recover strong axiomatizations of Kripke fixed-points by reflecting on simple T-biconditionals.  We study the result of iterating reflection principles over a basic disquotational theory in De Morgan logic. 

\noindent Iterated reflection principles have been employed extensively to unfold epistemic commitments that are incurred by accepting a mathematical theory. Recently this has been applied to theories of truth. The idea is to start with a collection of Tarski-biconditionals and arrive by finitely iterated reflection at strong compositional truth theories. In the context of classical logic it is incoherent to adopt an initial truth theory in which $A$ and `$A$ is true' are inter-derivable.  In this article we show how in the context of a weaker logic, which we call Basic De Morgan Logic, we can coherently start with such a fully disquotational truth theory and arrive at a strong compositional truth theory by applying a natural uniform reflection principle a finite number of times.
%\noindent Iterated reflection principles have been extensively employed as a tool to unfold the commitment one has on an agreed starting point. Feferman famously used them to investigate the limits of predicativity given the natural numbers by taking Peano Arithmetic as a base. Recently, Horsten and Leigh \cite{hol15} have applied this idea to formal theories of truth: starting with simple biconditionals of the form `$A$ is true if and only if $A$', they showed  that the iteration of uniform reflection leads to strong axiomatizations -- in classical logic -- of Kripke's fixed point models  for languages with a self-applicable truth predicate. In classical theories, however, one cannot recover the basic intuition according to which $A$ and `$A$ is true' should be interderivable for {\it every} sentence in the language. Weakening classical logic to what we will call Basic De Morgan logic allows us to take this intuition as our agreed starting point for the process of iterated reflection. In the paper we show how to recover and improve on strong compositional theories of truth whose principles and rules -- unlike what happens for classical theories -- are all valid in Kripke's fixed point models. %The change of logic comes with a loss of proof theoretic strength that will be measured and analyzed via reflection and meta-reflection rules. 
\end{abstract}

\section{Introduction}

 In the paper we pursue the strategy of iterating reflection principles on a basic truth theory $\tso$ that encapsulates, or so we argue, the fundamental building blocks of truth-theoretic reasoning. Its components are: a theory of the objects of truth (syntax theory in our case), basic truth-theoretic principles that enable us to infer $\T\ulc \phi\urc$ from a sentence $\phi$   and vice-versa. In order to remain faithful to these assertability conditions for truth ascriptions, classical logic cannot be used on account of the liar paradox. In particular,  %if one keeps the truth rules fixed, one keeps an intimate connection between truths (and falsities) and their assertability. Therefore 
one should be prepared not to have the rule of conditionalization for all sentences. Also, it is desirable to work at the right level of generality by, for instance, reasoning without committing oneself to paracomplete or paraconsistent options. We employ a logic that does this and call it, following \cite{fie08}, \emph{Basic De Morgan logic} (cf. \S\ref{cortru} for the definition). 

The theory $\tso$, however, may not be all there is to truth. Many authors have discussed further desiderata for truth -- such as full compositionality -- that are out of reach for our basic theory $\tso$. Recently, Leon Horsten and Graham Leigh  have studied  iterations of reflection over a basic theory in classical logic \cite{hol15}. Their classical starting point, however, leads to a loss of the intimate connection between truths and their assertability that is present in $\tso$. In the following we therefore extend Horsten and Leigh's strategy in the framework of Basic De Morgan logic. 

Reflection is rooted in the fundamental intuition that we are committed to the truth  of the  sentences that are provable in a theory that we accept. This operation can be expressed in different ways: as {\it global reflection}, where we make explicit use of the truth predicate, or as {\it uniform reflection}, where we express this intuition schematically without direct reference to truth. In the classical case, global and uniform reflection are provably different operations:  a variety of well-known theories  of truth can be closed under uniform reflection but not under global reflection.  However, this is not so in the case of Basic De Morgan logics and variations thereof, where global and uniform reflection coincide for a wide class of theories containing the truth principles of $\tso$. 

%\footnote{See for instance the case of ${\sf KF}$ plus an axiom ruling out truth-value gluts.}

% say that we retrieve, as Reinhardt wanted, at least the inner logic of kfint, but arguably not the inner logic of KF, although we don't know this. 

The theories that we study in this work, resulting from the iteration of reflection over $\tso$, can be characterized as {\it internal} axiomatizations of Kripke's fixed point construction because their principles and rules are all sound with respect to this class of models. The paradigmatic case of an internal theory is $\pkf$ \cite{hah06}. These theories are faithful to the fixed-point models and interact well with the process of reflection. There is an alternative array of theories capturing Kripke's construction  {\it externally}. They are couched in classical logic and they are meant to be faithful to the set-theoretic definition of this class of models. Among the external theories we find the well-known classical axiomatization $\kf$ of Kripke's construction \cite{fef91}, and also the iteration of reflection studied by Horsten and Leigh. Although external axiomatizations invoke principles that are not valid in the intended semantics and they usually cannot be closed under global reflection, they are usually proof-theoretically stronger than the corresponding internal axiomatizations. As a consequence, they also deem true more sentences that belong to the intended extension of the truth predicate than the corresponding internal axiomatizations; for some authors (cf. \cite{hal14,han16}), this is considered a clear advantage over internal theories. 

For instance, $\kf$ is proof-theoretically much stronger than its natural internal counterpart $\pkf$. The former proves transfinite induction for all sentences of the language with the truth predicate up to any ordinal smaller than $\varepsilon_0$, the latter only up to any ordinal smaller than $\omega^\omega$. The main result of the present article is that two steps of reflection over $\tso$ enbles us to recapture all principles of $\pkf$ and prove significantly more transfinite induction than what is available in $\pkf$. Moreover, iterated reflection on $\tso$ enables us to reach the strength of $\kf$.

%%%%%%%%%%%%%%%%%%%%%%%%%%%%%%%%%%%%%%%%%%%%%%%%%%%%%
\section{The core laws of truth}\label{cortru}

%Our main goal is to present a formalization of $\mathsf{PKF}$. Before we deal with $\mathsf{PKF}$ properly we develop its base logic, which is a partial logic, called $\mathsf{PAL}$, in some detail. I.e. we give $\mathsf{PAL}$ a sequent calculus formulation and prove the cut-elimination theorem. This is accompanied by both proofs of its soundness and completeness. For this end we use a two sided system build on a Tait language with finite sets of formulas.

%We follow the convention to distinguish free variables  $a,b,c,...$ and bound variables $x,y,z,...$; $FV(\Gamma)$ denotes the set of all free variables of $\Gamma$.

%\subsection{De Morgan Logic}\carlo{Change the name of the logic?}

In this section we introduce the main components of the theory $\tso$. We first introduce a two-sided sequent version of Basic De Morgan logic and state some simple properties of this calculus. We then introduce the principles governing the objects to which truth is ascribed, which will amount to the axioms of a very weak arithmetical theory.  Finally we state the truth theoretic principles of $\tso$. 

\subsection{Basic De Morgan Logic}  We employ a two-sided sequent calculus $\bdm$ reminiscent of the one employed in \cite{hal14}; sequents are expressions of the form $\Gamma \Ra \Delta$ where $\Gamma,\Delta$ are finite sets of formulas. We write $\neg \Gamma$ for $\{\neg A\sth A\in \Gamma\}$. $\bdm$ is a subsystem of a suitable two-sided classical calculus; its axioms and rules are listed in Table \ref{table:bmdpri}. Intuitively, $\bdm$ is obtained from classical logic by replacing the usual clauses for negation with ({\sf CP1}) and ({\sf CP2}) below. However,  the general negation rules ({\sf CP1-2}) enable us to derive the sequents $A\Ra \neg\neg A$ and $\neg \neg A\Ra A$ for all formulas $A$ and make the following contraposition rule admissible in $\bdm$:
	$$
			\AxiomC{$\Gamma \Ra \Delta$}\RightLabel{$\mathsf{(Cont)}$}
			\UnaryInfC{$\neg \Delta \Ra \neg \Gamma$}
			\DisplayProof{}. 
	$$
	
The following closely related lemma will be extensively used in what follows:
	\begin{lemma}
		For a signature $\mc S=\{P_1,\ldots,P_n\}$, if for all atomic formulas $A$ of $\mc S$ we can prove $\Ra A,\neg A$, then $\bdm(\mc S)$ is closed under the following classical rules for negation:
		
		\begin{align*}
			&\AxiomC{$ \Gamma \Ra \Delta,A$} \RightLabel{$(\neg {\sf L})$}
		\UnaryInfC{$\neg A, \Gamma \Ra \Delta$}
		\DisplayProof
			&&	\AxiomC{$ A,\Gamma \Ra \Delta$} \RightLabel{$(\neg {\sf R})$}
		\UnaryInfC{$ \Gamma \Ra \Delta,\neg A$}
		\DisplayProof
		\end{align*}
	\end{lemma}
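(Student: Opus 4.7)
The plan is to reduce the classical negation rules to a simpler target: show, by simultaneous induction on the complexity of $A$, that both sequents
\[
\Ra A, \neg A \qquad \text{and} \qquad A, \neg A \Ra
\]
are derivable in $\bdm(\mc S)$ for \emph{every} formula $A$ of the signature. Once this is in place, each rule follows by a single cut: given $\Gamma \Ra \Delta, A$ and $A, \neg A \Ra$, cutting on $A$ yields $\neg A, \Gamma \Ra \Delta$, which is $(\neg\mathsf{L})$; dually, given $A, \Gamma \Ra \Delta$ and $\Ra A, \neg A$, cutting on $A$ yields $\Gamma \Ra \Delta, \neg A$, which is $(\neg\mathsf{R})$.

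For the base case $A$ atomic, $\Ra A, \neg A$ is the hypothesis. To get $A, \neg A \Ra$, I apply the admissible rule $(\mathsf{Cont})$ to $\Ra A, \neg A$ to obtain $\neg A, \neg\neg A \Ra$, and then cut with the sequent $A \Ra \neg\neg A$ (derivable in $\bdm$ by CP1/CP2) on $\neg\neg A$. The inductive step treats each connective in turn. For the negation case $A = \neg B$, apply $(\mathsf{Cont})$ to the two IH sequents for $B$ and use $\neg\neg B \Ra B$, $B \Ra \neg\neg B$. For conjunction, $\Ra A \wedge B, \neg(A \wedge B)$ is assembled from $\Ra A, \neg A$ and $\Ra B, \neg B$ by $(\vee\mathsf{R})$ followed by $(\wedge\mathsf{R})$, together with the $\bdm$-derivable equivalence $\neg A \vee \neg B \dashv\vdash \neg(A \wedge B)$; the dual sequent $A \wedge B, \neg(A \wedge B) \Ra$ is built from $A, \neg A \Ra$ and $B, \neg B \Ra$ via weakening, $(\wedge\mathsf{L})$, and $(\vee\mathsf{L})$. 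Disjunction is handled dually, and the quantifier cases are entirely parallel, using the De Morgan interderivability of $\neg \forall x B$ with $\exists x \neg B$ (and its dual) available through CP1/CP2.

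The main obstacle is one of discipline rather than depth: since the classical rules $(\neg\mathsf{L})$ and $(\neg\mathsf{R})$ are precisely the target, the inductive argument must be carried out using only structural rules, the positive rules for $\wedge, \vee, \forall, \exists$, the restricted negation rules CP1/CP2, and the already-derived consequences $A \Ra \neg\neg A$, $\neg\neg A \Ra A$, and $(\mathsf{Cont})$. One has to check that in each inductive clause the required De Morgan equivalence between $\neg(A \star B)$ (or $\neg Qx\, B$) and its dual is in fact derivable from CP1/CP2 alone, without circular appeal to the rules under construction; this is routine case-checking, dependent on the exact presentation of CP1/CP2 in Table \ref{table:bmdpri}, but is the only point where care is required.
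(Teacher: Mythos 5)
Your proposal is correct. The paper states this lemma without giving any proof, so there is no argument of the authors' to compare yours against; what you give is the standard way to fill the gap, and it goes through. The two points that actually require checking both hold: (a) the dual sequent $A,\neg A\Ra$ for atomic $A$ is not part of the hypothesis but is obtainable exactly as you say, by $(\mathsf{Cont})$ followed by a cut with $A\Ra\neg\neg A$ (and conversely, so the two halves of your simultaneous induction are interderivable at every stage); and (b) the De Morgan sequents needed in the inductive clauses -- e.g.\ $\neg A\vee\neg B\Ra\neg(A\wedge B)$ from $A\wedge B\Ra A$ via $(\mathsf{Cont})$ and $(\mathsf{L}\vee)$, and $\neg\forall xB\Ra\exists x\neg B$ via $(\mathsf{R}\exists)$, $(\mathsf{CP}_1)$, $(\mathsf{R}\forall)$ and $(\mathsf{Cont})$ -- are all derivable from the positive rules together with $(\mathsf{CP}_{1\text{-}2})$, cut and weakening, with no circular appeal to $(\neg\mathsf{L})$ or $(\neg\mathsf{R})$. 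The only presentational point worth making explicit is that the hypothesis and the induction must be read as covering \emph{open} atomic formulas, since the quantifier cases need the induction hypothesis for $B(x)$ with $x$ free.
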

\noindent   $\bdm$ enjoys standard properties of Gentzen-type sequent calculi such as substitution, inversion, and cut elimination. 

A natural semantics for $\bdm$ is given in terms of four-valued models, that is we also allow predicates  with a partial or a paraconsistent behaviour (gaps and gluts).\footnote{Our logic is close to what is sometimes called {\sf FDE}. We follow, however, Field's terminology.}  \label{pagesat}The intended satisfaction relation has a double clause: a sequent is satisfied in a model $\mc M$ just in case if all formulas in the antecedent are true in $\mc M$ there is a formula in the consequent true in $\mc M$, {\it and} if   all formulas in the succedent are false in $\mc M$, there is a false-in-$\mc M$ formula in the antecedent. $\bdm$ is sound and complete with respect to the semantics just hinted at \cite{bla02}.

{\small	\begin{table}[h]
		\begin{tabular}{  c  c  c  }\hline\hline
			&&\\
			$A \Ra A$  &for $A\in \mc L_\T$&\\[2em]
			\AxiomC{$ \Gamma \Ra \Delta$} \RightLabel{(${\sf LW}$)}
		\UnaryInfC{$A, \Gamma \Ra \Delta$}
		\DisplayProof
				&\AxiomC{$ \Gamma \Ra \Delta$} \RightLabel{(${\sf RW}$)}
		\UnaryInfC{$ \Gamma \Ra \Delta, A$}
		\DisplayProof &\\[2em]
			\AxiomC{$\Gamma \Ra \Delta, A$} \AxiomC{$A, \Gamma \Ra \Delta$}\RightLabel{(${\sf Cut}$)}
		\BinaryInfC{$\Gamma \Ra \Delta$}
		\DisplayProof{}&&\\[2em]
		\AxiomC{$\neg \Gamma \Ra \Delta$}
			\RightLabel{($\mathsf{CP}_1$)}
			\UnaryInfC{$\neg \Delta\Ra \Gamma$}
			\DisplayProof
		&\AxiomC{$\Gamma\Ra \neg \Delta$}
			\RightLabel{(${\sf CP}_2$)}
			\UnaryInfC{$ \Delta\Ra \neg \Gamma$}
			\DisplayProof&\\[2em]
		\AxiomC{$ A, B, \Gamma \Ra \Delta $} \RightLabel{(${\sf L\wedge}$)}
		\UnaryInfC{$ A \wedge B, \Gamma \Ra \Delta$}
		\DisplayProof 
		&\AxiomC{$ \Gamma \Ra \Delta, A $}
		\AxiomC{$ \Gamma \Ra \Delta, B $}
		\RightLabel{(${\sf R\wedge}$)}
		\BinaryInfC{$ \Gamma \Ra \Delta, A \wedge B $}
		\DisplayProof
			&\\[2em]
		\AxiomC{$ A, \Gamma \Ra \Delta $}
		\AxiomC{$ B, \Gamma \Ra \Delta $}
		\RightLabel{(${\sf L\vee}$)}
		\BinaryInfC{$ A \vee B, \Gamma \Ra \Delta$}
		\DisplayProof
			&\AxiomC{$ \Gamma \Ra \Delta, A, B $} \RightLabel{(${\sf R \vee}$)}
		\UnaryInfC{$ \Gamma \Ra \Delta, A \vee B$}
		\DisplayProof&\\[2em]
		\AxiomC{$\Gamma, A(t) \Ra \Delta$}\RightLabel{(${\sf L \forall}$)}
        \UnaryInfC{$\Gamma, \forall x A \Ra \Delta$} 
        \DisplayProof{}		
		&\AxiomC{$\Gamma \Ra \Delta, A(t)$}\RightLabel{(${\sf R \exists}$)} 
		\UnaryInfC{$\Gamma \Ra \Delta, \exists x A$}
		\DisplayProof {}
		&\\[2em]
		\AxiomC{$\Gamma \Ra \Delta, A(x)$} 
		\RightLabel{(${\sf R \forall}$)}
		\UnaryInfC{$\Gamma \Ra \Delta, \forall x A$}
		\DisplayProof {}
		&\AxiomC{$\Gamma,A(x) \Ra \Delta$} 
		\RightLabel{(${\sf L \exists}$)}
		\UnaryInfC{$\Gamma, \exists x A \Ra \Delta$}
		\DisplayProof &\\[1em]
		$x$ not free in $\Gamma, \Delta$&$x$ not free in $\Gamma, \Delta$&\\[2em]\hline\hline\\[-0.5em]
%		%&{\bf Identity}&&&\\
%		&\AxiomC{$\Ra t=t$}\DisplayProof & \AxiomC{$s=t, A(s)\Ra A(t)$}\DisplayProof
	\end{tabular}\caption{The system $\bdm$}\label{table:bmdpri}
	\end{table}}

%We have height preserving substitution.\carlo{check!}
%
%\begin{lemma}$($Substitution$)$ \end{lemma}
%
% If $ \vdash \Gamma \Ra \Delta$, then $\vdash \Gamma_x (t) \Ra \Delta_x (t)$.\
%\\
%
%\noindent The \emph{proof} proceeds by induction on the height of a derivation with the usual caveat for the rules ($\exists \mathsf{L}$) and ($\forall \mathsf{R}$).
%
%\begin{lemma}[{\it Inversion}] 
%If $\vdash^n \Gamma \Ra \Delta$ (considered to be the conclusion of a rule of inference), then  $\vdash^n \Gamma' \Ra \Delta'$ (considered to  be a premise of a rule of inference).
%\end{lemma}
%
%
%\noindent The \emph{proof} is by induction on the height of a derivation.
%
%\noindent

\subsection{The theory $\tso$}

To formulate $\tso$, we first consider identity, which is governed by usual principles:
	\begin{align}
		\tag{{\sf Id1}} &\Ra t=t\\
		 \tag{{\sf Id2}}& s=t,\,A(s)\Ra A(t)
	\end{align}
$\tso$ and all its extensions will be formulated in the language of arithmetic $\mc L$, expanded with finitely many function symbols corresponding to suitable elementary operations and the truth predicate $\T$. We call the resulting language $\lt$. $\tso$ will also contain initial sequents $\Ra A$ for all basic axioms $A$ of a suitable system of arithmetic, in our case Kalmar's elementary arithmetic ${\sf EA}$ formulated in $\mc L_\T$ (cf. \cite{haj93,bek05}). In addition, our basic theory features an induction rule 
\begin{equation}
\tag{$\Delta_0$-$ { \sf IND }$} \AxiomC{$\Gamma,A(x)\Ra A(x+1),\Delta$}
\UnaryInfC{$\Gamma,A(0)\Ra A(t),\Delta$}
\DisplayProof
\end{equation}
for $x$ not free in $A(0),\Delta,\Gamma$, $t$ is arbitrary, and $A$ is a $\Delta_0$-formula of the language $\mc L$ of arithmetic without the truth predicate.  We call the resulting system ${\sf Basic}$. 

%It might be noteworthy that in Gentzen's original formulation the rules for left introduction of conjunction and right introduction of disjunction were formulated additively, thus height-preserving inversion of these rules failed.

The core principles of truth capture the fundamental idea that one is justified in asserting a sentence $A$ precisely when she is justified in asserting that $A$ is true.
\begin{dfn}[The system $\tsn$]
$\tsn$ is obtained by extending $\basic$ with the initial sequents
\begin{align*}
\tag{$\T1$}\T(\ulc A\urc) &\Ra A\\[1ex]
\tag{$\T 2$}A&\Ra \T(\ulc A\urc)
\end{align*}
for all $\mc L_\T$-sentences $A$.
\end{dfn} 
\noindent $\tso$ stands for `truth sequents'. The subscript $0$ indicates a restriction of induction to $\Delta_0$ formulas; its absence indicates full induction.  The semantic conservativeness -- and therefore the consistency -- of $\tso$ over $\basic$ can be obtained by expanding any model of the latter with an interpretation of the truth predicate resulting from a positive inductive definition along the lines of the Kripke construction (cf. \cite[\S5]{can89}).  The following observation can be found in \cite[Lem. 16]{hah06}. 
\begin{lemma}
$\Ra A, \neg A$ is derivable in $\mathsf{TS}_0$ given that $A$ is arithmetical.
\end{lemma}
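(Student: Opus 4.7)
The argument proceeds by induction on the complexity of the arithmetical formula $A$, and essentially all of the work falls in the atomic base case. The reason is that, once $\Ra B, \neg B$ has been secured for every atomic arithmetical $B$, the lemma stated earlier in this section grants us the classical negation rules $(\neg {\sf L})$ and $(\neg {\sf R})$ for the arithmetical signature. A single application of $(\neg {\sf R})$ to the initial sequent $A \Ra A$ then delivers $\Ra A, \neg A$ for any arithmetical $A$, so the induction on complexity collapses to its base.

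To dispatch the atomic case it is enough, by $({\sf Id2})$ plus the substitution admissibility of $\bdm$, to derive $\Ra x = y, \neg(x = y)$ for distinct free variables $x, y$. I would prove this by two nested uses of the $\Delta_0$-${\sf IND}$ rule, applied to the purely arithmetical $\Delta_0$ induction formulas $z = 0 \vee \neg(z = 0)$ and $x = z \vee \neg(x = z)$. The inner induction on $y$ first yields $\Ra y = 0, \neg(y = 0)$: the base uses $({\sf Id1})$ together with $({\sf RW})$, while the step exploits the EA-axiom $Sy \neq 0$ together with weakening. A use of $({\sf Id2})$ to swap the sides of equality then produces $\Ra 0 = y, \neg(0 = y)$ as well. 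The outer induction on $x$, with $y$ free, then reaches $\Ra x = y, \neg(x = y)$: the base case is what was just established, and the step combines the EA-axioms for successor (most importantly $Sx = Sy \lra x = y$) with an $({\sf L}\vee)$-case analysis on the induction hypothesis and on $y$, the latter case split being itself underwritten by a further instance of the inner induction together with an appeal to $\Ra z = 0 \vee \exists w\, (z = Sw)$ (another easy $\Delta_0$-induction from EA).

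The principal obstacle I foresee is keeping every step-case sequent strictly inside $\bdm$, since at the atomic level the classical negation rules are precisely what we are trying to earn. The crucial point is that the negative information needed at each step — instances of $Sy \neq 0$, $Sx \neq Sy$ on the unequal branch, and so on — is always supplied either directly by an EA-axiom or by an $({\sf L}\vee)$-decomposition of the induction hypothesis, so no unproven instance of the law of excluded middle is ever invoked. Once the atomic base case is secured in this way, the collapse described in the first paragraph completes the proof.
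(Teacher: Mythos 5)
Your overall architecture is the right one, and it is in fact essentially all the paper itself supplies: the paper gives no proof of this lemma but cites Halbach and Horsten's Lemma 16, whose argument is precisely the reduction you describe --- secure $\Ra A,\neg A$ for atomic arithmetical $A$, invoke the closure lemma to obtain $(\neg\mathsf{L})$ and $(\neg\mathsf{R})$ for the arithmetical signature, and apply $(\neg\mathsf{R})$ to the initial sequent $A\Ra A$. That part of your proposal is fine, as are the inner induction deciding $t=0$ and the passage from variables to arbitrary terms by substitution.

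The gap is in the outer induction. You induct on $x$ with the second argument held fixed as a free parameter, so the step you must derive is
\[
x=y\vee\neg(x=y)\ \Ra\ Sx=y\vee\neg(Sx=y).
\]
In the branch $y=Sw$ the axiom $Sx=Sw\lra x=w$ reduces the succedent to $x=w\vee\neg(x=w)$, but your induction hypothesis speaks of the pair $(x,Sw)$, not of $(x,w)$: knowing that $x=Sw$ is decided tells you nothing about whether $x=w$ is decided (from $\neg(x=Sw)$ both $x=w$ and $\neg(x=w)$ remain open). A ``further instance of the inner induction'' only decides equality with $0$, so it cannot close this branch either; as written, the step silently presupposes the very decidability of $x=w$ that is being proved. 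The standard repair is to make the hypothesis cover all smaller second arguments while staying $\Delta_0$: induct on $x$ with the formula $B(x):=\forall z{\le}u\,(x=z\vee\neg(x=z))$, where $u$ is a fresh parameter and the quantifier is bounded, so that in the branch $z=Sw$ one has $w<z\le u$ and the hypothesis applies at $w$; afterwards instantiate $u$ and $z$ as $y$. (Alternatively, detour through $x\le y\vee y\le x$ and trichotomy, each provable by $\Delta_0$-induction.) With that adjustment the rest of your argument goes through.
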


The principles of $\tso$ are therefore just right to capture the desired assertability conditions for truth ascriptions: its basic truth-theoretic principles (\T1)-(\T2) are in fact not as strong as the classical Tarski-biconditionals: otherwise they would lead to inconsistency. But they are also stronger than mere inference rules, as the latter do not allow for conditionalization for arithmetical sentences. 

In addition, we can think of $\tso$ as a minimal internal axiomatization of a fixed-point construction along the lines of Kripke's \cite{kri75}. The fixed-points we are interested in are in fact fixed-points of a monotone operator $\Gamma$ associated with the Basic De Morgan evaluation scheme.\footnote{For a definition of the operator and the evaluation scheme we refer the reader to Halbach \cite[section 15.1]{hal14}.} The crucial property of Kripke style fixed points $S$, i.e.~sets of sentences $S$ such that $\Gamma(S) = S$, is that every sentence $A$ is in $S$ iff ${ \sf T} ( \ulcorner A \urcorner)$ is in $S$. By combining this fact with the notion of satisfaction introduced on p.\pageref{pagesat} we can easily see that for a fixed point $S$, the model $(\mathbb{N}, S)$ satisfies $\tso$ when $S$ is taken to be the extension of the truth predicate. Moreover, for $(\mathbb{N}, S)$ to satisfy $\tso$, $S$ has to contain the same sentences as $\Gamma(S)$. This means that $S$ is a fixed-point of $\Gamma$ iff $(\mathbb{N}, S)$ satisfies $\tso$, and therefore it is an internal axiomatization of the fixed-point construction in the sense of \S1. Moreover any $\mathcal{L}_{\sf T}$-theory in ${\sf BDM}$ satisfying the adequacy condition just considered will contain the principles of $\tso$. 

%%%%%%%%%%%%%%%%%%

\subsection{The weakness of $\tso$ and the advantages of reflection in ${ \sf BDM}$}

In the previous subsection we have introduced $\tso$ as a natural and simple theory capturing distinctive features of the notion of truth. However, the theory in itself falls short of several adequacy requirements generally imposed to theories of truth--see for instance \cite{lei07} and \cite{hah15}. For instance one important requirement for truth is compositionality, which explains how we can understand complex sentences only on the basis of an understanding of its compounds and its logical structure. $\tso$ is clearly not compositional as can be realized by considering the quantifiers. For a universally quantified sentence, such as $\forall x A(x)$, the theory $\tso$ is not able to derive in general sequents explaining how the truth value depends on the truth values of its compounds $A( t)$ in the sense that $\forall x  {\sf T} ( [ A x]) \Ra { \sf T } ( \ulcorner \forall x  A (x)\urcorner )$.
 
Another criterion of adequacy for theories of truth that we want to impose is the ability to prove important generalizations; one especially desirable generalization to follow from a theory of truth is the soundness of the base theory stated in the form of the global reflection principle for $\basic$, i.e.~
\[ \tag{${ \sf GRF_\basic}$}  {\sf Bew}_\basic ( x ) \Rightarrow \T (x), \] which is not derivable in $\tso$ (where  ${\sf Bew}_T(x) $ is a canonical provability predicate for $T$). The underivability of the global reflection principle directly follows from the fact that $\tso$ is a conservative extension of $\basic$. 
%The conservativity of $\tso$ is easily established by a model-theoretic argument showing that any model of $\basic$ can be expanded to a model of $\tso$. THE CONSERVATIVENESS PROOF WAS ALREADY SKETCHED ABOVE

It is difficult to give an exact criterion of what it means for a truth theory to prove all the important generalizations that it should prove. But it is obvious that a stronger theory of truth is, all other things being equal, better than a weaker one. It is therefore natural to aim for a theory of truth that is, relative to a base theory, as strong as possible. Taken as a measure of the proof-theoretic strength of $\tso$, the conservativeness of $\tso$ over $\basic$ only considers arithmetical sentences, but we want our measure also to take into account generalizations involving the truth predicate. Therefore we will mainly focus on the amount of transfinite induction for the language $\mathcal{L}_{ \sf T}$ we can prove. 

Following a strategy already proposed and defended in  \cite{hol15} for theories formulated in classical logic, one may think of $\tso$ as {\it implicitly}  containing stronger principles, including compositional ones and principles of transfinite induction. This relation of implicit containment can be unfolded via postulating a hierarchy of reflection principles over $\tso$. Traditionally, reflection principles for a theory $T$ are explicit soundness assertions (``\emph{whatever is provable in $T$, is true}''). The soundness of $T$ is naturally expressed via ${\sf GRF}_T$.\footnote{This reading of reflection is ubiquitous in the literature. See for instance the classical handbook entry \cite{smo77} and \cite{hal14}. Kreisel and L\'evy in \cite{krl68} clearly states that global reflection is the intended soundness claim for a theory $T$.}
However, by Tarski's undefinability theorem, ${\sf GRF}_T$ can only be formulated if the expressive resources of $T$ are increased with a fresh truth predicate. Therefore, if one wants to express soundness in an arithmetical language, one must resort to schemata. A well-known candidate is what is widely known as the {\it uniform reflection principle} for $T$:
\[
\tag{${\sf RFN}_T$} \forall x({\sf Bew}_T([ A(x)]) \ra A(x))
\]
where $[A x] := {\sf sub} ( \ulc A x \urc, \ulc x \urc, {\sf num}(x))$, where ${\sf num}$ represents the elementary function sending a number $n$ to the $n$-th numeral and  ${\sf sub}$ the usual substitution function, whereas $x(y/v)$ will stand for ${\sf sub}(x,v,{\sf num}(y))$ with $v$ coding a variable free in $x$. ${\sf RFN}_T$ states that, for every number $x$, if $A$ is satisfied by the numeral for $x$, provably in $T$, then $A$ is satisfied by $x$. However, we are mainly concerned with languages that {\it do} contain a truth predicate. In this context, therefore, the most natural way to express the soundness of a theory is by means of global reflection. In fact, if the truth predicate satisfies minimal conditions, the global reflection principles implies all instances of uniform reflection for a theory $T$. 

There is therefore an intuitive connection between uniform and global reflection: both are intended to express the soundness of the base theory. It turns out, however, that this connection is lost in the classical axiomatizations of Kripke's fixed point construction considered by Leigh and Horsten in \cite{hol15}.  For $T$ an axiomatization of Kripke's fixed point construction in classical logic, in fact, the result of adding {\sf GRF}$_T$ to it determines a severe restriction of the class of acceptable models: all \emph{consistent} fixed points are excluded, i.e., if $(\mbb N,S)$ models $T+ {\sf GRF}_T$ with $S$ a fixed point, then $S$ is inconsistent.\footnote{ By the diagonal lemma, the arithmetical  part of $T$ already proves $(\lambda\land \neg \T\ulc \lambda\urc) \vee(\neg \lambda\land \T\ulc \lambda\urc)$ for $\lambda$ a liar sentence. Therefore $T+ {\sf GRF}_T$ proves $\T\big (\ulc (\lambda\land \neg \T\ulc \lambda\urc) \vee(\neg \lambda\land \T\ulc \lambda\urc)\urc)$. Since $T$ is an axiomatization of the class of Kripke fixed points, we can use compositional and truth-iteration principles to obtain, still in  $T+ {\sf GRF}_T$, $\T(\ulc \lambda \land \neg \lambda\urc)$. A well-known example of such a theory $T$ is a the theory ${\sf KF}$ from \cite{fef91} -- see also \cite{hal14}.} By contrast, $T+ {\sf RFN}_T$ can have models of the form $(\mbb N, S)$ for $S$ a consistent fixed point (in fact all consistent fixed points). 

There is a natural explanation for the internal inconsistency of $T+{\sf GRF}_T$: classical theories $T$ of the sort just mentioned are in fact {\it unsound} with respect to the notion of truth captured by $T$, and  ${\sf GRF}_T$ makes this explicit.  In fact, many theorems involving the truth predicate in a classical axiomatization $T$ of the fixed-point construction are outside the extension of the truth predicate given by consistent fixed points. The classical tautology $\lambda \vee\neg \lambda$ involving a liar sentence is one such example. Uniform reflection alone, in theories such as $T$, does not suffice to uncover their unsoundness:\footnote{This is also the reason why Horsten and Leigh could consider iterations of uniform reflection without restrictions on the fixed-points models.} this is the sense in which the intimate connection between global and uniform reflection is lost in the classical setting.

The close connections between the two forms of reflection just considered, however, can be restored by moving to internal axiomatizations of Kripke fixed points such as extensions of $\tso$. To see this, we first reformulate both principles in rule form and adapt them to the sequent-style formulation of $\tso$ we have chosen. 
\begin{align*}
&\AxiomC{$\Ra { \sf Bew}_T ( [ A {x} ] )$}\RightLabel{({$ { \sf RFN}_T^R $})}
\UnaryInfC{$\Ra A(x)$}\DisplayProof
&&\AxiomC{$\Ra {\sf Sent}_{\lt}(x)\land { \sf Bew}_T (x)$}\RightLabel{({$ { \sf GRF}_T^R $})}
\UnaryInfC{$\Ra\T x$}\DisplayProof
\end{align*}
In ({$ { \sf RFN}_T^R $}), $A(x)$ is a formula of $\lt$ with one free variable, in $ { \sf GRF}_T^R $ the elementary predicate  ${\sf Sent}_{\lt}(x)$ expresses the set of $\lt$-sentences, and in both rules ${ \sf Bew}_T(A)$ states that the sequent $\Ra (A)$ is derivable in $T$. Finally, we introduce an extension of $\tso$ obtained by replacing the axioms (\T1) and (\T2) with   
\begin{itemize}\label{uts}
 			\item[(i)] $ A ( x ) \Ra \T [ A x ] $;
 			 \item[(ii)] $ \T [ A x ] \Ra A(x)$.
	\end{itemize}
\noindent We call the resulting system $\utso$  (``\emph{uniform} $\tso$). We can now establish that not only uniform and global reflection are connected in Basic De Morgan logic, but that they actually {\it coincide}.
\begin{proposition}\label{grun}
 Let $T$ contain $\utso$. Then $T+{ \sf RFN}_T^R$ and $T+{ \sf GRF}_T^R$ are identical theories. 
\end{proposition}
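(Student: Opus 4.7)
The plan is to establish the two inclusions $T+{\sf RFN}_T^R\subseteq T+{\sf GRF}_T^R$ and $T+{\sf GRF}_T^R\subseteq T+{\sf RFN}_T^R$ separately, each via a very short derivation that uses exactly one of the axioms (i), (ii) of $\utso$ together with a single application of the other reflection rule.

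For $T+{\sf RFN}_T^R\subseteq T+{\sf GRF}_T^R$, I would start from a derivation of the premise of ${\sf RFN}_T^R$, namely $\Ra {\sf Bew}_T([Ax])$ for an $\lt$-formula $A(x)$. Since $[Ax]$ is elementarily verified to code an $\lt$-sentence, the base theory already proves $\Ra {\sf Sent}_\lt([Ax])$, so by $({\sf R}\land)$ one obtains $\Ra {\sf Sent}_\lt([Ax])\land {\sf Bew}_T([Ax])$. Instantiating the schematic free variable of ${\sf GRF}_T^R$ at the term $[Ax]$ delivers $\Ra \T[Ax]$, and cutting against axiom (ii) of $\utso$, i.e.\ $\T[Ax]\Ra A(x)$, yields $\Ra A(x)$.

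For the converse, I would start from $\Ra {\sf Sent}_\lt(x)\land {\sf Bew}_T(x)$ and aim to apply ${\sf RFN}_T^R$ with the specific choice $A(x):=\T x$, for which $[Ax]$ is the term $\ulc \T\bar x\urc$. The key ingredient is the formalized T-introduction rule, namely that $T$ proves the sequent
$$
{\sf Sent}_\lt(y),\; {\sf Bew}_T(y)\Ra {\sf Bew}_T(\ulc \T\bar y\urc).
$$
I would obtain this in the standard way from axiom (i) of $\utso$: for every $\lt$-sentence $\phi$, the sequent $\phi\Ra \T\ulc\phi\urc$ is an instance of (i) and therefore an axiom of $T$, and a code of the corresponding one-line proof can be produced by an elementary function of $\ulc\phi\urc$; provable $\Sigma_1$-completeness of ${\sf Bew}_T$ then yields the displayed sequent. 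Two cuts against $\Ra {\sf Sent}_\lt(x)$ and $\Ra {\sf Bew}_T(x)$ give $\Ra {\sf Bew}_T(\ulc \T\bar x\urc)$, after which a single application of ${\sf RFN}_T^R$ with $A(x)=\T x$ closes the derivation with $\Ra \T x$.

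The main obstacle I expect is verifying that the formalized T-introduction step is legitimate within the $\bdm$-based setting: provable $\Sigma_1$-completeness is a property of the arithmetical base, but one should check that the construction of uniform proof-codes for the disquotation schema goes through inside ${\sf EA}$ as formulated over $\bdm$, and that the admissibility of the classical negation rules furnished by the preceding lemma for decidable predicates is enough to carry the formalized necessitation argument through. Once this internalization is recorded, the two derivations above are essentially one application each of ${\sf GRF}_T^R$ or ${\sf RFN}_T^R$ sandwiched between a $\utso$-axiom and a cut, and no further conceptual work should be needed.
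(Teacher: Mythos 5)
Your proposal is correct and follows essentially the same route as the paper: the first inclusion is a single application of ${\sf GRF}_T^R$ cut against axiom (ii), and the second formalizes the T-introduction inference given by axiom (i) inside $T$ to pass from ${\sf Bew}_T(x)$ to ${\sf Bew}_T([\T x])$ and then applies ${\sf RFN}_T^R$ to the formula $\T v$. The internalization step you flag as a potential obstacle is exactly the point the paper also treats as routine (via the canonical sequent-provability predicate), and it is unproblematic since ${\sf Bew}_T$ is arithmetical and hence behaves classically over $\basic$.
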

\begin{proof}
We start by showing that global reflection entails uniform reflection. Reasoning in $T+{ \sf GRF}_T^R$, we assume that the sequent $\Ra {\sf Bew}_T([Ax])$ is derivable in it. Then, by ${ \sf GRF}_T^R$, we have $\T[Ax]$ and therefore $Ax$ by (ii) above. 

For the other direction, we reason in $T+{ \sf RFN}_T^R$ and assume that the sequent $\Ra {\sf Sent}_{\lt}(x)\land {\sf Bew}_T(x)$ is derivable in it. Also, we know that (i) and (ii) are derivable sequents of $\utso$ -- and then also of $T$ --, and therefore there will be a canonical provability predicate for sequents ${\sf Pr}_{T}(x,y)$ for $T$ such that $\Ra {\sf Pr}_{T}(x,[\T x])$ is a derivable sequent of $T$. By combining this latter fact with our assumption, we obtain $\Ra {\sf Bew}_{T}([\T x])$. By ${ \sf RFN}_T^R$, therefore, we can conclude $\Ra \T x$, as desired. 
\end{proof}

Proposition \ref{grun} suggests that Leigh and Horsten's project can be more coherently carried in the context of \emph{nonclassical} theories of truth. In the next section we will in fact employ strengthenings of the reflection principles considered in Proposition \ref{grun} to unfold the truth-theoretic and mathematical content implicit in the acceptance of $\tso$.

%%%%%%%%%%%%%%%%%%%

\section{Reflecting on $\tso$}

This section introduces the main results of the paper: in \S3.1 we discuss several alternative reflection rules and motivate the choice of a particular form of reflection on admissible rules that turns out to be stronger than simple reflection on derivable sequents. In \S3.2 we show that the closure under two applications of our rule of reflection suffices to recover the strong internal axiomatization of Kripke's fixed point $\pkf$. Finally, in \S3.3 we show that the result of reflecting twice on $\tso$ proves more transfinite induction for the language with the truth predicate than $\pkf$ itself. We conclude the section by investigating further iterations of reflection.

%%%%%%%%%%%%%%

\subsection{Reflection on sequents and rules}
\label{prelim}

In what follows, we assume a canonical G\"odel numbering for $\lt$-expressions. For a fixed expression $e$ of $\lt$,  we will use the usual G\"odel corners for the closed term of $\lt$ representing Gödel number $\#e$ of $e$.  Therefore, for formulas $A$ of $\lt$, we will have  $\ulcorner A \urcorner = \overline{ \# A}$. Similarly, for sequents $\Gamma \Ra \Delta$,  $\ulcorner \Gamma \Ra \Delta \urcorner = \overline{ \# \Gamma \Ra \Delta}$, where the G\"odel code of $\Gamma \Ra \Delta$ is taken to be an ordered pair whose components are the codes of the finite sets $\Gamma$ and $\Delta$.\footnote{We assume that the code of the finite set $\Gamma$ is the code of the sequence of codes of formulas in $\Gamma$ in ascending order.}  Closed terms standing for specific G\"odel codes of $\lt$-expressions contrast with open terms standing for templates to generate such closed terms: a well-known example of such a template is the open $\lt$-term ${ \sf sub } (\ulc A(v)\urc,\ulc v\urc,{\sf num}(x))$, standing for the result of formally substituting, in the formula $A(v)$, the free variable $v$ with the numeral for $x$. To distinguish these open terms from specific codes, we use square brackets instead of G\"odel codes, so that, for instance,   $[A(x)]$ stands for ${ \sf sub } (\ulc A(v)\urc,\ulc v\urc,{\sf num}(x))$.\footnote{The square brackets notation is often replaced by the so-called Feferman dot notation, in which, for instance, ${ \sf sub } (\ulc A(v)\urc,\ulc v\urc,{\sf num}(x))$ is abbreviated with $\ulc A(\dot x)\urc$. }   This distinction clearly generalizes to sequents and formulas with more than one free variable: $[ \Gamma \vec{x} \Ra \Delta \vec{x}]$ refers to the simultaneous substitution in $\ulc \Gamma \Ra \Delta\urc$ of the variables in the strings $\vec x$ with their corresponding numerals, where of course $[ \Gamma x \Ra \Delta x]$ is short for ${ \sf sub } (( \ulcorner \Gamma \urcorner , \ulcorner \Delta \urcorner),\ulc x\urc, { \sf num } (x) )$. When it is clear from the context which free variable we are formally substituting, we will omit it and treat ${\sf sub}$ as a binary function. %To formulate our reflection principles, we also fix a canonical provability predicate ${\sf Pr}_T(\cdot)$ for an elementary presented theory $T$. 

 As we have seen, in the classical setting the uniform reflection schema and rule take the form:
\begin{align*}
\tag*{(${ \sf RFN}_T^R)$}  &{ \sf Pr}_T ( [ A ({x}) ] ) \Ra A(x) \\[1em]
\tag{$ { \sf URFN}_T^R $}&\AxiomC{$\Ra { \sf Pr}_T ( [ A ({x}) ] )$}
\UnaryInfC{$\Ra A(x)$}
\DisplayProof
\end{align*}
\noindent Over ${\sf EA}$,  $ { \sf URFN}_T^R $ and ${ \sf RFN}_T^R$ are equivalent, as shown by Feferman in \cite{fef60}. In the non-trivial direction, i.e. going from the rule to the initial sequent, one shows that $\basic$ suffices to formalize the fact that the sentence ${\sf Prf}_T(\bar n, \ulc A(\bar m)\urc)\ra A(\bar m)$ is provable in $T$ for any $m,n\in \omega$. Therefore one application of $({ \sf URFN}_T^R)$ yields (${ \sf RFN}_T^R)$.

In the nonclassical setting the situation is different. Whereas in the classical setting we can formulate $({ \sf URFN}_T^R)$ and (${ \sf RFN}_T^R)$ in a one-sided sequent calculus, there are good reasons to stick with a two-sided calculus for Basic De Morgan logic. In a one-sided classical system, in fact, sequents $A,\neg A$ play the role that initial sequents $A\Ra A$ play in a two-sided setting. In our system this correspondence breaks down: first of all, $A,\neg A$ is not generally valid in our intended semantics -- if $A$ is a liar sentence, for instance --, whereas  $A\Ra A$ are initial sequents of our system.   Moreover, there is no conditional naturally corresponding to the sequent arrow since $\Ra A \rightarrow A$ is just a notational variant of $\Ra A\vee\neg A$. 

We therefore opt for a formulation of our first reflection principle as applying to provable (two-sided) sequents. As a consequence, basic syntactic considerations force us to formulate reflection in rule-form. The \emph{uniform reflection principle for sequents} of $T$ takes the following form:
\[
\tag{${\sf r}_T$}\label{refrul}
\AxiomC{$\Ra \mathsf{Pr}_T ( [\,\Gamma  \vec{x} \Ra \Delta \vec{y}\,])$}
\UnaryInfC{$\Gamma \Ra \Delta$}
\DisplayProof
\]

\noindent 
But the simple rule of reflection (${\sf r}_T$) is not the only form of reflection that will be relevant for what follows. A suitable conditional -- such as the classical or the intuitionistic conditional -- enables one to compress in one sequent chains of reasoning featuring embedded implications. In our setting, the highly \emph{meta-theoretic} nature of the sequent arrow forces us to capture these chains of reasoning explicitly via suitable extensions of the simple reflection rule (${\sf r}_T$).  One way to achieve this is to focus not only on provable sequents, but also to take into account   rules admissible in $T$.  
 
In order to introduce these generalized forms of reflection  we  consider a variation of the reflection rule based on a two place provability predicate $\pr^2_{T} (x,y) $ understood as `the inference from the sequent $x$ to the sequent $y$ is {\it provably} admissible in $T$'. In addition to being admissible, a {\it provably admissible rule} in a theory $T$ requires the existence of a $T$-provable proof transformation of the proof of the premise into a proof of the conclusion of the rule.\footnote{For instance, although the rule of cut applied to `geometric' formulations of Robinson's arithmetic {\sf Q} is admissible in it, cut is not provably admissible in {\sf Q} as this procedure is of hyperexponential growth rate. (For a geometric presentation of Robinson's arithmetic and for the cut elimination for it, see \cite{nevo98}.)}    

$\prt_T$ enjoys generalized versions of some of the properties usually ascribed to provability predicates:
	\begin{align*}
	\label{pr1}\tag{$\pr1$}& \text{If }\;\;\;
									\AxiomC{$\Gamma(x) \Ra \Delta(x)$}
									\UnaryInfC{$\Theta(x)\Ra \Lambda(x)$}
									\DisplayProof\;\;\;
							\text{is admissible in $T$, provably in $\basic $, then}\\[1ex]
				 & \basic \vdash \prt_T([\Gamma(x)\Ra \Delta(x)],[\Theta(x)\Ra \Lambda(x)])\\[1em]
		\tag{$\pr2$}&	\text{If the sequents}\\[1ex]
				&\hspace{10pt}\prt_T([\Gamma(x)\Ra \Delta(x)],[\Theta(x)\Ra \Lambda(x)]); \\
				&\text{and }\\
				&\hspace{10pt}\pr_T([\Gamma(x)\Ra \Delta(x)])\\[1ex]
				&\text{are derivable in $\basic$, then also }\\[1ex]
				&\hspace{10pt}\pr_T([\Theta(x)\Ra \Lambda(x)])\\[1ex]
				&\text{is derivable in $\basic$}.
	\end{align*}
\noindent We can then define the \emph{uniform reflection principle for provably admissible rules} in $T$:

	\begin{align*}
	\tag{${\sf R}_T$} 
		&			\AxiomC{$\prt_T([\Gamma(x)\Ra \Delta(x)],[\Theta(x)\Ra \Lambda(x)])$}
					\AxiomC{$\Gamma(x) \Ra \Delta(x)$}
					\BinaryInfC{$\Theta(x)\Ra \Lambda(x)$}
					\DisplayProof
	\end{align*}

\noindent Obviously,  in the context of any reasonable theory $T$, ${\sf R}_T$ implies ${\sf r}_T$.  If $T$ is an axiomatizable theory, then the \emph{reflection on $T$} is the closure of ${ \sf Basic }$ under the reflection rules $\rr{T}$ and $\fr{T}$:

	\begin{align*} 
		\rr{T} &:= \basic+({\sf r}_T)\\
		\fr{T}&:=\basic+({\sf R}_T)
	\end{align*}

\noindent Theories obtained by iterating our reflection rules are then defined in a standard manner: for instance,  $\fr{\fr{T}}$ is the result of closing $\fr{T}$ under ${\sf R}_{{\sf R}(T)}$. We abbreviate $\fr{\fr{T}}$ as ${ \sf R }^2 ({T})$, and similarly for more iterations. 

\label{questrul}We have introduced (${\sf R}_T$) as a generalization of $({\sf r}_T)$. A natural question is whether (${\sf R}_T$) is actually stronger than the simpler rule. We will not answer to this question in this paper but we will prove some facts that may be relevant for a future answer. For instance, we now provide an upper bound for the strength of ${\sf r}(\utso)$; later -- cf. Proposition \ref{pr2ti} -- we will show that the resulting theory is a proper subtheory of ${\sf R}^2(\tso)$.

 The upper bound for ${\sf r}(\utso)$ that we now provide is given in terms of the theory $\pkf$ that was mentioned in the introduction. $\pkf$ is also formulated in the language $\lt$, and its axioms and rules are displayed in Table \ref{table:pkf}. 
 
 \begin{table}
 	\begin{tabular}{ l | l }\hline\hline\\
		\textsc{logic}&   logical initial sequents and rules of $\bdm$\\[5pt]
		\textsc{identity}& {\sf Id1}, {\sf Id2}\\[10pt]
		\textsc{arithmetic}& $\Ra A$ for $A$ a basic axiom of {\sf EA}\\[5pt]
		& plus the {\it full} induction rule for $\lt$:\\[8pt]
		& \AxiomC{$\Gamma,A(x)\Ra A(x+\bar 1),\Delta$}\RightLabel{({\sf IND})}
\UnaryInfC{$\Gamma,A(0)\Ra A(t),\Delta$}
\DisplayProof\\[20pt]
\textsc{atomic truth}& ($\T=_1$)	 \hspace{10pt}${ \sf ct } ( x ) , { \sf ct } ( y ), { \sf val} (x) = {\sf val} (y) \Ra \T ( x \subdot{=} y ) $ \\[5pt]
&($\T=_2$)  \hspace{10pt}${ \sf ct } ( x ) , { \sf ct } ( y ),   \T ( x \subdot{=} y )  \Ra { \sf val} (x) = {\sf val} (y)$\\[5pt]
&($\T\T_1$)\hspace{10pt} $\T [\T x] \Ra \T x$\\[5pt]
	&($\T\T_2$)\hspace{10pt}	$\T  x \Ra  \T [\T x]$\\[10pt]
	{\sc truth principles}&($\T \wedge_1$)\hspace{10pt}$\sentt (x \subdot\wedge y), \T (x) \wedge \T (y) \Ra \T ( x \subdot\wedge y )$\\[5pt]
	{\sc for connectives}&($\T \wedge_2$)\hspace{10pt}$\sentt (x \subdot\wedge y),  \T ( x \subdot\wedge y ) \Ra \T (x) \wedge \T (y)$\\[5pt]
	&($\T \vee_1$)\hspace{10pt}$\sentt (x \subdot\vee  y), \T (x) \vee \T (y) \Ra \T ( x \subdot\wedge y )$\\[5pt]
	&($\T \vee_2$)\hspace{10pt}$\sentt (x \subdot\vee  y),  \T ( x \subdot\vee  y ) \Ra \T (x) \vee  \T (y) $\\[5pt]
	&($\T \neg_1$)\hspace{10pt}$\sentt (x), \T (\subdot\neg x) \Ra \neg \T ( x )$\\[5pt]
	&($\T \neg_1$)\hspace{10pt}$\sentt  (x), \neg \T ( x ) \Ra \T (\subdot\neg x) $\\[10pt]
	{\sc truth principles}&($\T\forall_1$)\hspace{10pt}$\sentt (\subdot\forall yx), \forall y \T x(y/v) \Ra \T ( \subdot\forall y x )$\\[5pt]
	{\sc for quantifiers}&($\T\forall_2$)\hspace{10pt}$\sentt (\subdot\forall y x), \T ( \subdot\forall y x ) \Ra \forall y \T x(y/v)$\\[5pt]
	&($\T\exists_1$)\hspace{10pt}$ \sentt (\subdot\exists y x), \exists y \T x(y/v)  \Ra \T ( \subdot\exists y x )$\\[5pt]
	&($\T\exists_2$)\hspace{10pt}$\sentt (\subdot\exists y x),\T ( \subdot\exists y x )\Ra \exists y \T x(y/v)$\\[5pt]\hline\hline
%		\tag{\T $\forall_1$} & \sentt (\subdot\forall yx), \forall y \T x(y/v) \Ra \T ( \subdot\forall y x ) \\
%		\tag{\T$\forall_2$} & \sentt (\subdot\forall y x), \T ( \subdot\forall y x ) \Ra \forall y \T x(y/v) \\
%		\tag{\T$\exists_1$} & \sentt (\subdot\exists y x), \forall y \T x(y/v)  \Ra \T ( \subdot\exists y x )
	\end{tabular}\caption{The theory $\pkf$}\label{table:pkf}
 \end{table}
 
  In \T=$_{1-2}$, the function symbol $\subdot{=}$ represents the elementary syntactic operation  of forming an identity statement out of two terms. A similar notation will be applied for other syntactic operations.  As mentioned earlier, $\pkf$ is an internal axiomatization of Kripke's theory of truth. Crucially, $\pkf $ is fully compositional as also negation commutes with the truth predicate. Halbach and Horsten in \cite{hah06} have measured the proof-theoretic strength of $\pkf$ by showing that  $\pkf$ proves arithmetical transfinite induction up to the ordinal $\vphi_\omega0$. Therefore we can use $\pkf$ as means of comparison for our theories of iterated reflection. 

	\begin{proposition}\label{upper}
		${ \sf r (UTS_0) }$ is a subtheory of ${ \sf PKF }$.
	\end{proposition}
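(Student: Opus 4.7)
The plan is to establish two things: first, that $\basic \subseteq \pkf$, and second, that $\pkf$ is closed under the reflection rule ${\sf r}_{\utso}$. The first is immediate by inspection of Table~\ref{table:pkf}: every initial sequent and rule of $\basic$ --- including $\Delta_0$-induction --- is present in $\pkf$, which has the strictly stronger full induction rule for $\lt$. So the substance lies in the second claim.

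To prove closure, I would establish an internal soundness statement for $\utso$ inside $\pkf$:
\[ \pkf \vdash \forall d\, \forall s \bigl({\sf Prf}_{\utso}(d,s) \to {\sf TV}(s)\bigr), \]
where ${\sf TV}(s)$ formalises the $\T$-validity of the sequent coded by $s$, universally closed over its free variables: if $s$ codes $\Gamma \vec x \Ra \Delta \vec y$, then ${\sf TV}(s)$ reads $\forall \vec x\, \forall \vec y\, \bigl(\bigwedge_{\gamma \in \Gamma} \T[\gamma \vec x] \to \bigvee_{\delta \in \Delta} \T[\delta \vec y]\bigr)$. The argument is a formalised induction on $d$, which is licensed by the full $\lt$-induction rule of $\pkf$. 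For the base cases: logical initial sequents $A \Ra A$ are immediate; the arithmetical axioms of ${\sf EA}$ and the identity axioms --- themselves derivable in $\pkf$ --- are promoted to their $\T$-translations using $(\T=_1), (\T=_2)$ together with compositional manipulation under $\T$; the two $\utso$-specific axioms (i) $A(x) \Ra \T[Ax]$ and (ii) $\T[Ax] \Ra A(x)$ have $\T$-translations that reduce directly to $(\T\T_1)$ and $(\T\T_2)$. For the inductive cases, each rule of $\basic$ is shown to preserve ${\sf TV}$ using the compositional truth axioms of $\pkf$: the connective and quantifier rules use $(\T\wedge_{1,2}), (\T\vee_{1,2}), (\T\forall_{1,2}), (\T\exists_{1,2})$; the contraposition rules $({\sf CP}_1), ({\sf CP}_2)$ are handled via $(\T\neg_1), (\T\neg_2)$; cut, weakening, and $\Delta_0$-induction only require elementary manipulations of $\T$-applications that are available in $\pkf$.

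Once this internal soundness is in place, closure under ${\sf r}_{\utso}$ follows at once. Suppose $\pkf \vdash\, \Ra {\sf Pr}_{\utso}([\Gamma \vec x \Ra \Delta \vec y])$; instantiating the soundness statement yields $\pkf \vdash \bigwedge_\gamma \T[\gamma \vec x] \Ra \bigvee_\delta \T[\delta \vec y]$. By the standard ``transparency'' of $\pkf$ --- for each fixed formula $A$ of $\lt$, the compositional axioms allow one to derive $\pkf \vdash \T[A\vec x] \Ra A(\vec x)$ and conversely by external induction on the complexity of $A$ --- one replaces each $\T[\gamma \vec x]$ by $\gamma(\vec x)$ and each $\T[\delta \vec y]$ by $\delta(\vec y)$, yielding $\pkf \vdash \Gamma \Ra \Delta$, as required. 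The main obstacle is the formalised soundness induction itself: one has to treat uniformity in the free variables of the proved sequent carefully, and the verification of the $\utso$-axioms (i), (ii) via $(\T\T_{1,2})$, as well as of $({\sf Id2})$ via $(\T=_{1,2})$, requires somewhat delicate compositional reasoning under $\T$ within Basic De Morgan logic, where the absence of a well-behaved conditional forces one to do all manipulations at the sequent level.
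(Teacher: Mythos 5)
Your overall architecture is the right one --- $\basic\subseteq\pkf$ is trivial, and the substance is a formalized soundness proof for $\utso$ inside $\pkf$ followed by disquotation --- but there is a genuine gap in how you set up the soundness predicate. You define ${\sf TV}(s)$ directly in terms of the type-free truth predicate $\T$ of $\pkf$ and a material conditional, and then propose to run an induction on proof length on the formula $\forall s({\sf Prf}_{\utso}(d,s)\to{\sf TV}(s))$. In $\bdm$ the material conditional $A\to B$ is just $\neg A\vee B$, and it supports detachment and conditional introduction only where excluded middle is available. The antecedent ${\sf Prf}_{\utso}(d,s)$ is arithmetical, so that occurrence is harmless; but ${\sf TV}(s)$ itself embeds conditionals between formulas of the form $\T[\gamma]$ for \emph{arbitrary} coded sentences $\gamma$, and $\Ra \T x,\neg\T x$ is not derivable in $\pkf$ (it fails for the liar). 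The induction step then breaks at concrete points: for instance, to show that cut preserves ${\sf TV}$ you must pass from $\bigwedge\T\Gamma\to(\bigvee\T\Delta\vee\T A)$ and $(\bigwedge\T\Gamma\wedge\T A)\to\bigvee\T\Delta$ to $\bigwedge\T\Gamma\to\bigvee\T\Delta$, which requires a case split on $\T A$ for an arbitrary quantified code $A$ --- exactly the instance of excluded middle you do not have. Retreating ``to the sequent level'' does not help, because the induction hypothesis must be packaged as a single formula to feed into the induction rule.

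The paper's proof avoids this by \emph{not} using the type-free $\T$ in the soundness statement: it defines ramified predicates ${\sf Tr}_\alpha(x):\equiv\T(x)\land{\sf Sent}_{\mc L_\T^{<\alpha}}(x)$ and invokes the Halbach--Horsten result that $\pkf$ proves these to behave like classical Tarskian truth predicates for $\alpha<\omega^\omega$. The soundness predicate ${\sf TR}_\omega$ is built from ${\sf Tr}_\omega$ (with a second, falsity-directed clause matching the double-barreled satisfaction relation for sequents), so excluded middle holds for it and the material-conditional induction is legitimate; the verification of the $\utso$-axioms (i)--(ii) then goes through shifting of Tarskian indices ($\T_\omega[\T[Ax]]$ yields $\T_n[\T_m[Ax]]$ for some $m<n$, hence $\T_n[Ax]$) rather than through $(\T\T_{1\text{-}2})$ as you suggest. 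Your closing disquotation step (external induction on the complexity of a fixed $A$ to get $\T[A\vec x]\Ra A(\vec x)$ and its converse in $\pkf$) is fine, but without the classically-behaved ramified soundness predicate the central induction does not go through, so the proposal as written does not establish the proposition.
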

\begin{proof} To prove Proposition \ref{upper} we only need to check that $\pkf$ can handle reflection. Therefore we first establish that $\pkf$ is strong enough to prove the soundness of $\utso$. To this end, for ordinal codes $\alpha$, we define a hierarchy of predicates ${\sf Tr}_\alpha(\cdot)$  as $\T(\cdot)\land {\sf Sent}_{\mc L_\T^{<\alpha}}(\cdot)$.\footnote{The truth predicates ${\sf Tr}_\alpha$ can be defined for as many ordinals as we can code in our theory. In \S\ref{subse:trare}, in particular, we will employ a coding for ordinals  smaller than $\Gamma_0$. } Halbach and Horsten establish in \cite{hah06} that $\pkf$ proves the predicates ${\sf Tr}_\beta$ to behave like Tarskian truth predicates for $\beta<\omega^\omega$: that is for formulas of $\lt$ that are in $\lt^{<\omega}$, the classical commutation conditions for typed truth predicates hold, while for $\lt$-formulas $A$ not in $\lt^{\omega}$, we can prove $\neg \T_\omega \ulc A\urc$.

We can extend this definition to sequents via the predicate ${\sf TR}_\alpha(\cdot)$ in the following way:
	\begin{align*}
		{\sf TR}_\alpha(\ulc \Gamma\Ra \Delta\urc)\;:\lra\; & \big(({\sf Tr}_\alpha(\ulc \bigWedge\Gamma\urc )\ra {\sf Tr}_\alpha(\ulc \bigVee\Delta\urc)) \;\land\\
			& ({\sf Tr}_\alpha(\ulc \neg\bigVee\Delta\urc )\ra {\sf Tr}_\alpha(\ulc\neg  \bigWedge\Gamma\urc)\big)
	\end{align*}
	From the proof-theoretic analysis of $\pkf$ in \cite{hah06} we know that the  truth predicates up to $\omega^\omega$ behave classically in it, and that we can employ the material conditional to carry out the inductive proof of the following:
	\begin{equation}
		\pkf \vdash\Ra  {\sf Pr}_{\sf UTS_0}([\Gamma x\Ra \Delta x])\ra {\sf TR}_\omega([\Gamma x \Ra \Delta x])
	\end{equation}
	
	The proof employs the induction rule of $\pkf$. 
	 %$\pkf$ formalizes the (partial) cut-elimination procedure for $\utso$. Therefore we restrict our attention to derivations in which all cuts are on atomic formulas. 
	It suffices, therefore, to establish
	\begin{align}
		\label{na1}& \Ra{\sf Prf}_{\utso}(0,[\Gamma x \Ra \Delta x])\ra {\sf TR}_\omega( [ \Gamma x \Ra \Delta x])\\
		\label{na2}&{\sf Prf}_{\utso}(u,[\Gamma x \Ra \Delta x])\ra {\sf TR}_{\omega}([\Gamma x \Ra \Delta x])\Ra \\ \notag &{\sf Prf}_{\utso}(u+1,[\Gamma x \Ra \Delta x])\ra {\sf TR}_{\omega}[\Gamma x \Ra \Delta x])
	\end{align} 
where ${\sf Prf}_{T} (x,y)$ expresses that $y$ is provable in $T$ with a proof of length less or equal to $x$.

We consider the crucial case of the characterizing principles of $\utso$, (i)-(ii) on page \pageref{uts}. Reasoning classically in $\pkf$, we assume 
\begin{equation}\label{ubpr1}
	{\sf Prf}_{\utso}(0,[\T[Ax] \Ra  Ax])
\end{equation}
We need to show
\begin{align}
	\label{ubpr2}& \T_\omega[\T[Ax]]\ra \T_\omega[Ax]\\
	\label{ubpr3}& \T_\omega [\neg Ax]\ra \T_\omega [\neg \T[Ax]]
\end{align}
We start with \eqref{ubpr2}. If $\T_\omega[\T[Ax]]$, then for some $n\in \omega$ and $m<n$, $\T_n[\T_m[Ax]]$. Therefore, since $\T_n$ and $\T_m$ are Tarskian truth predicates, also $\T_n[Ax]$. 

Similarly for \eqref{ubpr3}, if $\T_\omega [\neg Ax]$, then $\T_n[\neg Ax]$ for some $n\in \omega$, and, since $\T_n[Ax]$ is in $\lt^{n+1}$, also $\T_{n+1}\neg[\T_n[Ax]]$ and therefore $\T_\omega[\neg \T[Ax]]$.
	\end{proof}

%%%%%%%%%%%%%%%%%%%%%%%%%%%%%%%%%%%%%%
\subsection{Recovering compositionality by reflection}

One of the goals of this section is to show that by reflecting on our core laws of truth we can recover desirable compositional principles. More specifically, reflecting on $\tso$ is sufficient to recover the initial sequents  and the full induction rule of  ${ \sf PKF }$. 

In a first step we show that adding the reflection principle for $\mathsf{TS}_0$ to ${ \sf Basic }$ allows us to derive the initial sequents of $\utso$.
\begin{lemma} $ \mathsf{UTS}_0 \subseteq \mathsf{r} (\mathsf{TS}_0)$.
\end{lemma}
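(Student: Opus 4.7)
The plan is to derive both characteristic sequents of $\utso$,
\[
A(x) \Ra \T[Ax] \quad\text{and}\quad \T[Ax] \Ra A(x),
\]
for any $\lt$-formula $A(v)$ with one free variable, by a single application of the reflection rule $({\sf r}_{\tso})$ in each case. First I would fix such a formula $A(v)$ and note that for every numeral $\bar n$ the instance $A(\bar n)$ is an $\lt$-sentence. Hence by the axioms $(\T1)$ and $(\T2)$ of $\tso$ both
\[
A(\bar n) \Ra \T\ulc A(\bar n)\urc \qquad\text{and}\qquad \T\ulc A(\bar n)\urc \Ra A(\bar n)
\]
are initial sequents of $\tso$. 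Since elementary arithmetic proves the closed-term identity $[A\bar n] = \ulc A(\bar n)\urc$, an application of $({\sf Id2})$ yields the sequents $A(\bar n) \Ra \T[A\bar n]$ and $\T[A\bar n] \Ra A(\bar n)$ as theorems of $\tso$.

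The next step is to internalise the preceding observation uniformly in $x$. Using the usual provable $\Sigma_1$-completeness for the canonical proof predicate of $\tso$---which is already available in $\basic$ since the latter extends ${\sf EA}$---together with the elementary fact that the substitution function ${\sf sub}$ computes uniformly in $x$ the G\"odel code of the sequent obtained by formally substituting the numeral for $x$, one derives in $\basic$ the sequents
\begin{align*}
&\Ra\; \pr_{\tso}([\,A(x) \Ra \T[Ax]\,]),\\
&\Ra\; \pr_{\tso}([\,\T[Ax] \Ra A(x)\,]),
\end{align*}
with $x$ free in each. A single application of the reflection rule $({\sf r}_{\tso})$ to either then delivers precisely the corresponding defining sequent of $\utso$ inside $\rr{\tso}$.

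The main technical point to watch will be the internalisation step: one has to track carefully the interplay between the concrete numeral $\ulc A(\bar n)\urc$ and the open substitution term $[A(x)] = {\sf sub}(\ulc A(v)\urc,\ulc v\urc,{\sf num}(x))$, ensuring that the equation between them is provable in ${\sf EA}$ and, crucially, that this proof can be internalised uniformly in $x$ inside the proof predicate so that the above two sequents really are derivable in $\basic$ as displayed. This is routine arithmetisation, and no strengthening of the reflection rule nor any induction beyond what is already present in $\basic$ will be required.
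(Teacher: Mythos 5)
Your proposal is correct and follows essentially the same route as the paper: derive the instances $\T\ulc A(\bar n)\urc \Ra A(\bar n)$ (and conversely) from $(\T1)$--$(\T2)$, internalise their $\tso$-provability uniformly in $x$ inside $\basic$, and apply $({\sf r}_{\tso})$ once. The only cosmetic difference is that you label the internalisation step as provable $\Sigma_1$-completeness, whereas the paper gets the free-variable sequent $\Ra \pr_{\tso}([\,\T[Ax]\Ra A(x)\,])$ more directly by observing that these sequents are \emph{axioms} of $\tso$ and that the elementary predicate ${\sf Ax}_{\tso}$ is verifiable in $\basic$ uniformly in $x$ -- which is exactly the uniformity point you correctly flag as the step to watch.
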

\begin{proof}
 For all $n \in \omega$ we have $\mathsf{TS}_0 \vdash \T ( \ulcorner A(n) \urcorner )\Ra A ( n ) $.
Therefore, ${ \sf Basic}$ proves:
	\begin{equation}\label{einz}
	 	\forall y ( \sentt(y) \rightarrow {\sf Ax}_{\tso} ( {\sf sub}(\ulcorner \T x \urcorner, {\sf num}(y)), y )) 
	\end{equation}
and 
	\begin{equation}\label{zwei}
		 \forall x\,( {\sent}( [Bx] ),
	\end{equation}
where, we recall, $[Bx]:={\sf sub}( \ulcorner Bv \urcorner ,{\sf num}(x) )$ for all $\lt$-formulas $B$ with one free variable.
Therefore, by combining \eqref{einz} and \eqref{zwei}, we also have in $\basic$
	\begin{equation}
		{\sf Ax}_{\tso} ( {\sf sub}( \ulcorner \T x \urcorner, {\sf num}([Ax]), [Ax] ) )
	\end{equation}
Therefore, by definition of the canonical provability predicate $\pr_{\tso}$,
	\begin{equation}\label{drei}
		  \pr_{\tso} ( {\sf sub}( \ulcorner \T x \urcorner , {\sf num}([Ax])), [Ax])
		 \end{equation}
Let ${\sf tr}(x)$ the elementary function that formally prefixes $\ulc\T\urc$ to the numeral for $x$. Then $\basic$ also proves the equation:
	\begin{align}
		 {\sf sub}( \ulcorner \T x \urcorner , {\sf num}( [A x]) )
 		&= {\sf sub}({\sf tr}([A v]), {\sf num}(x) )
	\end{align}
By performing the appropriate substitution in \eqref{drei}, we have
 	\begin{equation}\label{vier}
		\pr_{\tso} ({\sf sub}({\sf tr}([ Av] ), { \sf num} (x)), [ Ax ]  )
	\end{equation}
and therefore, by the properties of substitution, also
 	\begin{equation}
		\pr_{\tso}( {\sf sub}({\sf tr}( [ Av]) ,  \ulcorner Av \urcorner ), {\sf num}(x) ).
  	\end{equation}
\noindent The other direction is analogous.

In ${ \sf r } ( { \sf TS }_0)$ -- and a fortiori in {\sf R}($\tso$) -- therefore, we obtain
	\begin{align*}
		&\T[Ax]\Ra A(x)\\
		&A(x)\Ra \T[Ax]
	\end{align*}
as desired. 
\end{proof}

As a consequence of the previous lemma, in ${ \sf r } ( { \sf TS }_0)$ we can already prove the full truth sequents for atomic arithmetical formulas and for truth ascriptions containing free variables ($\T=_1$), ($\T=_2$), ($\T\T_1$), and $(\T\T_2)$. That ($\T\T_1$) and $(\T\T_2)$ are direct instances of the initial truth sequents of $\utso$ is immediate. For the identity initial sequents, a slightly more general version of the Tarski sequents would be required, namely one in which at least two free variables appear. However, since we are working over $\basic$, we can always assume that the free variable in the truth sequents of $\utso$ stands for (the code of) a string of free variables of finite length. 
%\begin{align*}
% \tag{\T=$_1$}	 { \sf ct } ( x ) , { \sf ct } ( y ), { \sf val} (x) = {\sf val} (y) \Ra \T ( x \subdot{=} y )   \\
% \tag{\T=$_2$}	 { \sf ct } ( x ) , { \sf ct } ( y ),   \T ( x \subdot{=} y )  \Ra { \sf val} (x) = {\sf val} (y)
%\end{align*}
\noindent
 However, also the other initial, compositional sequents of $\pkf$ for the propositional connectives $\wedge, \vee, \neg$ can be proved in ${ \sf r } ( { \sf TS }_0)$:
\begin{lemma}
In ${ \sf r } ( { \sf TS }_0)$ we can derive $(\T \wedge_{1\text{-}2})$, $(\T \vee_{1\text{-}2})$, $(\T \neg_{1\text{-}2})$.
%\begin{align*}
%  		\tag{\T $\wedge_1$} & \sentt (x \subdot\wedge y), \T (x) \wedge \T (y) \Ra \T ( x \subdot\wedge y ) \\
%		\tag{\T$\wedge_2$} & \sentt (x \subdot\wedge y),  \T ( x \subdot\wedge y ) \Ra \T (x) \wedge \T (y)\\
%		\tag{\T$\vee_1$} &\sentt (x \subdot\vee  y), \T (x) \vee \T (y) \Ra \T ( x \subdot\wedge y )\\
%		\tag{\T$\vee_2$} & \sentt (x \subdot\vee  y),  \T ( x \subdot\vee  y ) \Ra \T (x) \vee  \T (y)    \\                                       		\tag{\T$\neg_1$} &\sentt (x), \T (\subdot\neg x) \Ra \neg \T ( x )\\
%		\tag{\T$\neg_2$} &\sentt  (x), \neg \T ( x ) \Ra \T (\subdot\neg x) \\                                      		
%		\tag{\T$\T_1$} & \T [\T x] \Ra \T x\\
%		\tag{\T$\T_2$} & \T  x \Ra  \T [\T x]
%\end{align*}
 \end{lemma}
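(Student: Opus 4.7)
The plan is to derive each compositional principle by applying $({\sf r}_{\tso})$ to a uniform family of $\tso$-derivable sequents, exactly mirroring the structure of the preceding lemma. Take the representative case of $(\T\wedge_1)$: for every pair of $\lt$-sentences $A,B$ with $A\wedge B$ a sentence, one can derive in $\tso$ the sequent $\T(\ulc A\urc)\wedge\T(\ulc B\urc)\Ra\T(\ulc A\wedge B\urc)$ by chaining the instances of (T1) on $A$ and on $B$, the standard rules $({\sf L}\wedge)$ and $({\sf R}\wedge)$, and the instance of (T2) on $A\wedge B$, closed by a cut. This proof schema has a fixed shape independent of the complexity of $A,B$. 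I would then formalize the schema inside $\basic$ along the lines of \eqref{einz}--\eqref{vier} in the previous lemma, and handle the degenerate case in which $x\dwedge y$ fails to code a sentence by weakening from a $\Delta_0$-refutation of $\sentt(x\dwedge y)$, to obtain
$$\basic\vdash \forall x\forall y\,\pr_{\tso}([\,\sentt(x\dwedge y),\,\T x\wedge\T y\Ra\T(x\dwedge y)\,]).$$
A single application of $({\sf r}_{\tso})$ then delivers $(\T\wedge_1)$.

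For $(\T\wedge_2)$ and for $(\T\vee_{1\text{-}2})$ I would run the dual argument: decompose the truth of the compound via (T1), then recompose the components' truth via (T2), together with the appropriate applications of $({\sf L}\vee)$ and $({\sf R}\vee)$. The same uniformity, and hence the same formalization inside $\basic$, applies.

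The negation cases exploit the admissibility of ({\sf Cont}) in $\bdm$. For $(\T\neg_1)$, I would contrapose $\T(\ulc A\urc)\Ra A$ (an instance of T1) to obtain $\neg A\Ra\neg\T(\ulc A\urc)$ and cut against $\T(\ulc \neg A\urc)\Ra \neg A$, another instance of (T1). For $(\T\neg_2)$ I would contrapose (T2) applied to $A$ to get $\neg\T(\ulc A\urc)\Ra\neg A$ and cut against $\neg A\Ra\T(\ulc \neg A\urc)$, which is (T2) applied to $\neg A$. Since ({\sf Cont}) is admissible uniformly in the premise, its formalized counterpart inside $\pr_{\tso}$ is unproblematic, and the same recipe yields the desired principles after one application of $({\sf r}_{\tso})$.

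The main obstacle, such as it is, is clerical rather than conceptual: one must verify that the template notation $[\cdot]$ commutes with the syntactic operations $\dwedge,\dvee,\dneg$ in the expected way, so that substituting the numerals for $x,y$ in $\ulc\T x\wedge\T y\Ra\T(x\dwedge y)\urc$ yields exactly the concrete sequent we know how to derive in $\tso$. This is the same formal manipulation of ${\sf sub}$, ${\sf num}$, and related primitive recursive functions that underlies the proof that $\utso\subseteq{\sf r}(\tso)$; no new ideas beyond those already displayed there are required.
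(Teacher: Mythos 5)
Your proposal is correct and follows essentially the same route as the paper: derive the schematic compositional sequents in $\tso$ from (\T1), (\T2), the $\bdm$ rules for the connectives (and ({\sf Cont}) for negation), formalize the uniformity of these derivations in $\basic$ to obtain $\pr_{\tso}([\,\sentt(x\subdot\wedge y),\T x\wedge \T y\Ra \T(x\subdot\wedge y)\,])$ and its analogues, and conclude by one application of the reflection rule. You in fact supply more detail than the paper does on the object-level derivations and on the degenerate non-sentence case.
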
 
\begin{proof} In $\tso$ we can directly prove the schematic form of the compositional clauses, for example $\tso \vdash \T (\ulcorner A \urcorner) \wedge \T (\ulcorner B \urcorner) \Ra (\T \ulcorner A \wedge B \urcorner) $ for all $\lt$-sentences $A,B$. By formalizing this fact in $\basic$, we obtain
\begin{equation}\label{comp1}
	\pr_{\tso} ([ \sentt ( x \subdot\wedge y ), \T x \wedge \T y \Ra \T (x \subdot\wedge y)]). 
\end{equation} 
In ${\sf r}(\utso)$, therefore, we can then move from the formalization to the full quantifiable statement 
\begin{equation}\label{comp2}
\sentt ( x \subdot\wedge y ),\T x \wedge \T y \Ra \T (x \subdot\wedge y)
\end{equation}
as desired.  The case for the other connectives are analogous. 
\end{proof}

However, by looking at Table \ref{table:pkf} one realizes that compositional initial sequents for the propositional connectives by themselves are not enough to capture all truth principles of ${ \sf PKF }$: we also need initial sequents for quantifiers and full induction for $\lt$ ({\sf IND}).  We will first show how to recover full induction.

It is a well known result that, in arithmetical context, (uniformly) reflecting on  $\mathsf{EA}$ suffices to obtain the full induction schema for $\mc L$.  Kreisel and L\'evy, in \cite{krl68},  proved the equivalence of uniform reflection and full induction over {\sf EA} -- that is the equivalence of {\sf EA} plus uniform reflection for {\sf EA} and Peano Arithmetic ({\sf PA}).\footnote{See Beklemishev \cite{bek05}, p.~37 for a proof of this fact.} We will apply Kreisel and Levy's strategy to our setting. In order to do so, however, their original argument has to be modified in several respects. First of all, we allow formulas of $\lt$ and not just of $\mc L$ to appear in instances of the induction schema. In addition, we have to consider an induction {\it rule} because the induction axiom involving the material conditional may fail to be sound in the setting of Basic De Morgan Logic. Finally, already in this step, we shall employ of our generalized reflection rule ${\sf R}_T$ instead of the basic reflection rule ${\sf r}_T$. In what follows we denote as {\sf PA}$_\T$ the version of {\sf PA} formulated in $\lt$ whose logic is {\sf BDM} and in which the truth predicate can appear in instances of induction.

\begin{lemma} $\mathsf{PA_T} \subseteq \mathsf{R ({ \sf Basic})}$ 
\end{lemma}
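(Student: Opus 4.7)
The plan is to adapt the Kreisel--L\'evy unfolding of full induction from uniform reflection \cite{krl68} to our sequent setting, crucially invoking the rule-form $(\mathsf{R}_\basic)$ rather than the simpler $(\mathsf{r}_\basic)$: since an instance of the $\lt$-induction rule produces a conclusion that is only \emph{conditionally} derivable in $\basic$, there is no fixed provable sequent onto which sequent-reflection could be applied, whereas rule-reflection is exactly geared toward schematic implications between provability claims. Fix an arbitrary $\lt$-formula $A(x, \vec y)$ and contexts $\Gamma(\vec y), \Delta(\vec y)$ with $x$ not free, and abbreviate
\begin{align*}
s_1 &:= \Gamma(\vec y), A(x, \vec y) \Ra A(x+1, \vec y), \Delta(\vec y),\\
s_2 &:= \Gamma(\vec y), A(0, \vec y) \Ra A(x, \vec y), \Delta(\vec y).
\end{align*}

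The main step is to show that the transition from $s_1$ to $s_2$ is provably admissible in $\basic$. Given any proof $p$ of $s_1$, the substitution property of $\bdm$ furnishes, for each $k<n$, a proof of $\Gamma, A(\bar k, \vec y) \Ra A(\overline{k+1}, \vec y), \Delta$ of size linear in the size of $p$; chaining these $n$ proofs via $(\textsf{Cut})$ on the formulas $A(\bar 1, \vec y), \ldots, A(\bar{n-1}, \vec y)$ yields a proof of $\Gamma, A(0, \vec y) \Ra A(\bar n, \vec y), \Delta$. The resulting map $(p, n) \mapsto \text{cut-chain-proof}$ is primitive recursive of linear complexity and hence elementary, so $\basic$ (which extends $\mathsf{EA}$) can define it and verify its correctness with the help of its $\Delta_0$-induction rule. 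Formalizing this delivers
$$
\basic \vdash \prt_\basic([s_1], [s_2]).
$$

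A single application of $(\mathsf{R}_\basic)$ to this provable-admissibility statement together with the assumed derivation of $s_1$ then produces $s_2$ in $\mathsf{R}(\basic)$; a final substitution of an arbitrary term $t$ for the free variable $x$ recovers the full induction rule of $\mathsf{PA_T}$ for $A$. Since $A$ was arbitrary and $\basic$ already carries the $\mathsf{EA}$-axioms, this establishes $\mathsf{PA_T} \subseteq \mathsf{R}(\basic)$.

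The principal obstacle I foresee is the formalization bookkeeping: one has to verify that the cut-chain proof-code is elementarily bounded in $n$ and $|p|$, so that its existence falls within the reach of $\basic$'s restricted induction rule, and that the presence of $\T$ inside $A(x,\vec y)$ is genuinely invisible to the syntactic proof transformation. The detour through rule-reflection rather than the single-conditional shortcut used by Kreisel and L\'evy is the price for working inside $\bdm$, where induction must be posed as a rule rather than an axiom.
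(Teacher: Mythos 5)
Your proposal is correct and follows essentially the same route as the paper: establish that the passage from the premise to the numeral-instantiated conclusion of the induction rule is provably admissible in $\basic$ via an elementary (cut-chaining) proof transformation, formalize this as $\prt_\basic([s_1],[s_2])$ using (Pr1), and then apply the rule-reflection $(\mathsf{R}_\basic)$ once to obtain the uniform conclusion. Your explicit justification of why the sequent-reflection rule $(\mathsf{r}_\basic)$ would not suffice, and your sketch of the elementary bound on the cut-chain, are consistent with (and slightly more detailed than) the paper's argument.
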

\begin{proof}
Let $A(x)$ be a formula in $\lt$ with one free variable. We want to show that in $\mathsf{R(\basic)}$ the full induction rule  
	\begin{equation}\label{fuind}
		\AxiomC{$\Gamma, A(x) \Ra A( x+\bar 1 ), \Delta $ }
		\UnaryInfC{$\Gamma, A (0) \Ra A (t), \Delta$}
		\DisplayProof
	\end{equation}
for formulas of $\lt$ is admissible. The following inference is admissible in $\basic$ -- and in fact in predicate logic in $\lt$ only -- for any $n\in \omega$:
	\begin{equation}\label{indeinz}
		\AxiomC{$\Gamma, A(x) \Ra A( x+\bar 1 ), \Delta $ }
		\UnaryInfC{$\Gamma, A (0) \Ra A (\bar n), \Delta$}
		\DisplayProof
	\end{equation}
By \eqref{pr1},  since the proof transformation in \eqref{indeinz} is elementary, $\basic$ proves
	\begin{equation}\label{indzwei}
		\prt_{{\sf Basic}}(\ulc\Gamma, A(x)\Ra A(x+\bar 1),\Delta\urc,\ulc\Gamma, A(0)\Ra A(\dot y),\Delta\urc)
	\end{equation}
Now by assumption, \eqref{indzwei}, and ${\sf R}_{\basic}$ we conclude
	$$
		\Gamma, A(0)\Ra A(y),\Delta
	$$
\end{proof}
The full set of compositional sequents of $ { \sf PKF }$ is obtained by complementing the clauses for the connectives by the ones for quantifiers. This can be achieved by closing the theory ${\sf R}(\tso)$ under ${\sf R}_{{\sf R}(\tso)}$, that is, by performing one iteration of the general reflection rule. 
\begin{lemma}
	${ \sf R }^2( { \sf TS }_0)$ proves $(\T \forall_{1-2})$ and $(\T \exists_{1-2})$.
%	\begin{align*}
%  		\tag{\T $\forall_1$} & \sentt (\subdot\forall yx), \forall y \T x(y/v) \Ra \T ( \subdot\forall y x ) \\
%		\tag{\T$\forall_2$} & \sentt (\subdot\forall y x), \T ( \subdot\forall y x ) \Ra \forall y \T x(y/v) \\
%		\tag{\T$\exists_1$} & \sentt (\subdot\exists y x), \forall y \T x(y/v)  \Ra \T ( \subdot\exists y x )
%\end{align*}
\end{lemma}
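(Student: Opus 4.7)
My plan is to mirror the recipe used for the propositional connectives, replacing the sentence-level Tarski sequents by their open-formula counterparts available in ${\sf UTS}_0$. Concretely, I would derive the four quantifier clauses schematically inside ${\sf R}(\tso)$, formalize the uniformity of that derivation in $\basic$, and then extract a universally quantified sequent by one application of reflection on ${\sf R}(\tso)$. I will describe $(\T\forall_1)$ in detail; the remaining three cases are symmetric.

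For the schematic step, since ${\sf UTS}_0 \subseteq {\sf R}(\tso)$, I can start from the initial sequent $\T[A(v,\vec w)] \Ra A(v,\vec w)$ for an arbitrary $\lt$-formula $A(v,\vec w)$. One application of $({\sf L\forall})$ binds $v$ in the antecedent, yielding $\forall v\,\T[A(v,\vec w)] \Ra A(v,\vec w)$, and then $({\sf R\forall})$ --- legitimate because $v$ is no longer free in the antecedent --- gives $\forall v\,\T[A(v,\vec w)] \Ra \forall v\,A(v,\vec w)$. Cutting against the second ${\sf UTS}_0$ sequent $\forall v\,A(v,\vec w) \Ra \T[\forall v\,A(v,\vec w)]$ (taking $\forall v A$ itself as the formula to which ${\sf UTS}_0$ is applied) produces the schematic target
$$\forall v\,\T[A(v,\vec w)] \Ra \T[\forall v\,A(v,\vec w)].$$

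This derivation is elementary and uniform in the code of $A$, so $\basic$ verifies that the substituted sequent is ${\sf R}(\tso)$-derivable for every $x$ coding an $\lt$-formula with free variable $v$; for the remaining values of $x$, $\basic$ refutes $\sentt(\subdot\forall y x)$ and the sequent follows by weakening. Hence
$$\basic \vdash \pr_{{\sf R}(\tso)}\big([\sentt(\subdot\forall y x),\, \forall y\,\T x(y/v) \Ra \T(\subdot\forall y x)]\big),$$
and one application of ${\sf r}_{{\sf R}(\tso)}$, which is contained in ${\sf R}_{{\sf R}(\tso)}$, delivers $(\T\forall_1)$ inside ${\sf R}^2(\tso)$. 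The $(\T\forall_2)$, $(\T\exists_1)$, $(\T\exists_2)$ clauses are obtained dually, by swapping the roles of the two ${\sf UTS}_0$ sequents and of the left/right quantifier rules.

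The main obstacle I anticipate is the syntactic bookkeeping needed at the formalization step: one must verify that the formal operations $\subdot\forall y(\cdot)$, $\cdot(y/v)$, and the numeral substitution hidden in $[\cdot]$ commute the way the informal derivation requires, so that the elementary function extracting the ${\sf R}(\tso)$-proof of the substituted sequent from $x$ is actually recognisable as such by $\basic$. Modulo this routine check, the argument is the quantifier analogue of the one already given for the connectives.
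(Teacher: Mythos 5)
Your proposal is correct and follows essentially the same route as the paper: derive the quantifier clause schematically in ${\sf R}(\tso)$ from the ${\sf UTS}_0$ sequents plus $({\sf L\forall})$/$({\sf R\forall})$ and a cut, formalize the uniformity of that derivation in $\basic$, and then apply one more reflection on ${\sf R}(\tso)$ to obtain the quantified sequent in ${\sf R}^2(\tso)$. The only cosmetic difference is that you cut with the uniform sequent (i) applied to $\forall v\,A$ where the paper invokes the sentential $(\T 2)$, and you make explicit the (correct, but routinely omitted) handling of codes $x$ for which $\sentt(\subdot\forall y\,x)$ fails.
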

\begin{proof}
We prove \T $\forall_1$; the other cases are treated similarly. For all $\lt$-formulas $A(v)$ with only $v$ free, ${\sf R}(\tso)$ proves 
	\begin{align*}
		&\T[Ay]\Ra A(y) &&\text{by Lemma \ref{uts}}\\
		&\forall y\,\T[Ay]\Ra  \forall y\,A(y) &&\text{by logic}\\
		&\forall y\, \T[Ay]\Ra  \T\ulc\forall y\,A(y)\urc &&\text{by (\T2)}
	\end{align*}
		The argument just carried out in ${\sf R}(\tso)$ can uniformly be formalized in $\basic$, i.e., $\basic$ proves: 
		\[
			{\sf Pr}_{{\sf R}(\tso)}(\ulc\sentt (\subdot\forall y\dot x),\forall y\,\T \dot x(y/v)\Ra \T(\forall y\,\dot x(y/v))\urc)
		\]
	Therefore $\rtwo{\tso}$ suffices to conclude
	\[
		\sentt (\subdot\forall y x),\forall y\,\T  x(y/v)\Ra \T(\forall y\, x(y/v)),
	\]
	as desired.
\end{proof}
\begin{corollary}\label{pkfinc}
$\pkf\subseteq \rtwo{\tso}$. 
\end{corollary}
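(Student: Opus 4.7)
The plan is to observe that the four preceding lemmas already cover the entire axiomatic content of $\pkf$ as listed in Table \ref{table:pkf}, and that everything they prove is in fact derivable inside $\rtwo{\tso}$. Concretely, I will check each block of Table \ref{table:pkf} and indicate which lemma supplies the corresponding principles, using the basic monotonicity facts $\rr{\tso}\subseteq \fr{\tso}\subseteq \rtwo{\tso}$ and $\fr{\basic}\subseteq \fr{\tso}\subseteq \rtwo{\tso}$ (the first because $({\sf r}_T)$ is an instance of $({\sf R}_T)$; the second because any extension of $\basic$ admits more admissible rules, and in particular $\basic$-admissible ones remain provably admissible after extension).

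The logical and identity rules come for free, since $\rtwo{\tso}$ is by construction an extension of $\basic$, which already contains $\bdm$, ${\sf Id1}$ and ${\sf Id2}$. The arithmetical initial sequents for ${\sf EA}$ are also in $\basic$, and the full induction rule $({\sf IND})$ for $\lt$ comes from the lemma $\mathsf{PA_T}\subseteq \fr{\basic}$, together with the inclusion $\fr{\basic}\subseteq \rtwo{\tso}$. The atomic truth principles $(\T=_{1\text{-}2})$ and the truth-iteration principles $(\T\T_{1\text{-}2})$ are instances of the uniform Tarski sequents of $\utso$, and hence are available already in $\rr{\tso}$ by the lemma $\utso\subseteq \rr{\tso}$, as spelled out in the remark following that lemma. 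The compositional clauses for the propositional connectives, $(\T\wedge_{1\text{-}2})$, $(\T\vee_{1\text{-}2})$ and $(\T\neg_{1\text{-}2})$, come from the second lemma, which places them in $\rr{\tso}$. Finally, the quantifier clauses $(\T\forall_{1\text{-}2})$ and $(\T\exists_{1\text{-}2})$ are exactly what the last lemma establishes in $\rtwo{\tso}$.

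Putting these pieces together, every axiom and rule of $\pkf$ as displayed in Table \ref{table:pkf} is either an initial sequent or an admissible rule of $\rtwo{\tso}$, so $\pkf\subseteq \rtwo{\tso}$. There is really no obstacle here beyond bookkeeping: the substantive content (closure under full $\lt$-induction and recovery of the quantifier truth clauses) required the passage to the second iteration of $({\sf R}_T)$, which has already been carried out in the two preceding lemmas; the corollary merely records that the principles collected so far exhaust $\pkf$.
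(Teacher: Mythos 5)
Your proposal coincides with what the paper intends: Corollary \ref{pkfinc} carries no separate proof precisely because it is the bookkeeping you describe, matching each block of Table \ref{table:pkf} against the four preceding lemmas and the inclusions $\rr{\tso}\subseteq\fr{\tso}\subseteq\rtwo{\tso}$.

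One of your justifications needs repair, though. You argue that $\fr{\basic}\subseteq\fr{\tso}$ on the grounds that ``any extension of $\basic$ admits more admissible rules'' and that $\basic$-admissible rules ``remain provably admissible after extension.'' Admissibility is not monotone under extension of a theory: passing to $T'\supseteq T$ enlarges the stock of derivable premises, so a rule (even a provably admissible one) can fail in $T'$ because the $T$-provable proof transformation says nothing about the new $T'$-proofs of its premise. What actually saves the step is local to the lemma $\mathsf{PA_T}\subseteq\fr{\basic}$: the only rule it feeds into $\prt$ is the passage from $\Gamma, A(x)\Ra A(x+\bar 1),\Delta$ to $\Gamma, A(0)\Ra A(\bar n),\Delta$, whose proof transformation is purely logical (an elementary chain of substitutions and cuts, as the paper notes it holds ``in predicate logic in $\lt$ only''), and is therefore provably admissible verbatim in $\tso$ and in any $\bdm$-theory extending it. So the full induction rule does land in $\fr{\tso}\subseteq\rtwo{\tso}$ as required, but via this observation about the specific proof transformation, not via a blanket monotonicity principle for provable admissibility. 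With that substitution, your argument is complete and is the paper's argument.
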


\noindent
Corollary \ref{pkfinc} shows that two iterations of the generalized rule ${\sf R}_{\tso}$ over our basic theory $\tso$ suffices to recover all compositional truth laws that weren't immediately provable in the original theory as well as the full induction rule for the full language $\lt$. If reflection is considered to be a procedure already implicit in the acceptance of $\tso$, then the laws of $\pkf$ follow naturally from a few applications of this process. However, it is natural to ask whether the inclusion established in Cor. \ref{pkfinc} is proper.

These questions translate, on the conceptual side, into the task of approximating the set of sentences that are valid in the intended models of our theories, which are the Kripke fixed-point models. In doing so, we gather information on how many truth iterations and general claims involving truth we are permitted to assert upon accepting $\tso$ (after reflection)  and how many mathematical patterns of reasoning we regain in the form of transfinite induction. 

%%%%%%%%%%%%%%%%%%%%%%%%%%%%%%%%%%%%%%
\subsection{Recovering transfinite induction by reflection}
\label{subse:trare}
In this section we investigate the question of how much transfinite induction for $\lt$ can be recovered in  iterations of the generalized reflection rule over $\tso$. One of the upshots of our analysis will be that $\rtwo{\tso}$ properly extends ${\sf PKF}$. %This follows from the proof theoretic analysis of {\sf PKF} carried out by Halbach and Horsten \cite{hah06} and Proposition \ref{pr2ti}. 

To carry out our proofs, we need to assume a notation system $({\sf OT},\prec)$ for ordinals up to the Feferman-Sch\"utte ordinal $\Gamma_0$ as it can be found, for instance, in \cite{poh09}. ${\sf OT}$ is a primitive recursive set of ordinal codes and $\prec$ a primitive recursive relation on ${\sf OT}$ that is isomorphic to the usual ordering of ordinals up to $\Gamma_0$. We distinguish between fixed ordinal codes, which we denote with $\alpha,\beta,\gamma\ldots$, and $\zeta,\eta,\theta\ldots $ as abbreviations for variables ranging over elements of {\sf OT}. From the results in \cite{hah06} it follows that $\pkf$ proves transfinite induction for $\lt$ only up to any ordinal smaller than $\omega^\omega$. If we focus only on $\mc L$-formulas, however, $\pkf$ proves that much higher ordinals are well-ordered, in particular, $\pkf$ proves the same arithmetical sentences as ${\sf PA}$ plus transfinite induction for $\mc L$ up to any ordinal smaller than $\vphi_{\!\omega}0$. 

Before analyzing how much transfinite induction can be proved in $\rtwo{\tso}$, we introduce some notation. 
%We first the define the notion of a progressive formula:
%\begin{align*} 
%	&{\sf Prog}(A) := \forall \xi\prec \eta\;A(\xi)\Rightarrow A(\eta)
%\end{align*}
 The schema of transfinite induction up to $\alpha$ for the formula $A(v)$ of a language $\mc L_1$ containing $\mc L$ is the rule
\begin{align*}
	&\AxiomC{$\forall \xi\prec \eta\;A(\xi)\Rightarrow A(\eta)$}\RightLabel{${\sf TI}_{\mc L_1}(A,\alpha)$}
	\UnaryInfC{$ \Ra \forall \xi \prec \alpha \;A ( \xi )$}
	\DisplayProof
\end{align*}
\noindent We then denote transfinite induction up to some ordinal $\alpha$ with ${\sf TI}_{\mc L_1} (<\alpha )$, standing for the closure under all rules ${ \sf TI}_{\mc L_1} (A,\beta)$ for $A \in \mc L_1$ and $\beta \prec \alpha$. Analogously, we write ${\sf TI}_{\mc L_1} ( \alpha )$ for the closure under all rules ${ \sf TI}_{\mc L_1} (A,\alpha)$ for $A \in \mc L_1$. In what follows, we will only deal with the cases in which $\mc L_1$ is either $\mc L$ itself or $\lt$. 

As a measure of strength of the theories obtained via iteration of reflection we will mainly focus on how much transfinite induction for $\lt$ is derivable in such theories. However, there is often a direct connection between the amount of  transfinite induction for $\lt$ and $\mc L$ derivable in a truth theory. Both in the case of $\kf$ and $\pkf$, for instance, the amount of transfinite induction for $\lt$ available in the systems -- that is  ${\sf TI}_{\lt} (<\vphi_{1}0 )$ and ${\sf TI}_{\lt} (<\vphi_{0}\omega )$ respectively -- can be used to define classical, Tarskian truth predicates indexed by these ordinals with the crucial contribution of the compositional truth principles of the two theories. This gives a lower bound for the systems in terms of ramified truth hierarchies up to \label{protar}$\vphi_{1}0$ (or $\varepsilon_0$) and $\vphi_{0}\omega$ (or $\omega^\omega$) respectively, which -- by a classical result by Feferman -- yields that $\kf$ and $\pkf$ are proof-theoretically as strong as at least  {\sf PA}$+{\sf TI}_{\lt} (<\vphi_{\varepsilon_0}0 )$ and {\sf PA}$+{\sf TI}_{\lt} (<\vphi_{\omega}0 )$ respectively. 

The following proposition shows that iterating the generalized reflection rule twice over $\tso$ enables us to go beyond $\pkf$. This also gives us more information about the question that was posed on page \pageref{questrul} about the comparison between the rules $({\sf r}_T)$ and $({\sf R}_T)$.  By Proposition \ref{upper}, the theory ${\sf r}(\utso)$ is a subtheory of {\sf PKF}. The next will entail that ${\sf R}^2(\tso)$ is indeed stronger than $\pkf$.

\begin{proposition} \label{pr2ti}
${\sf R}^2(\basic)\vdash {\sf TI}_{\lt} ( \omega^\omega )$
\end{proposition}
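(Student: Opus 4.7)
The strategy exploits the generalized reflection rule $({\sf R}_{\fr{\basic}})$ available in $\rtwo{\basic}$ to internalize a uniform Gentzen-style proof transformation. By the preceding lemma, $\fr{\basic}$ contains $\mathsf{PA_T}$, so the full $\lt$-induction rule together with the elementary arithmetic of the notation system $({\sf OT}, \prec)$ are available in $\fr{\basic}$, and \emph{a fortiori} in $\rtwo{\basic}$.

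Concretely, I would fix $A(v) \in \lt$ and take the progressivity sequent $\forall \xi \prec \eta A(\xi) \Ra A(\eta)$ as the hypothesis to be discharged. The heart of the argument is to show, working in $\basic$, that the following ``jump'' rule, parameterized by $\eta$ (with $\zeta$ as a further free variable), is provably admissible in $\fr{\basic}$:
\[
\AxiomC{$\forall \xi \prec \eta A(\xi) \Ra A(\eta)$}
\UnaryInfC{$\forall \xi \prec \zeta A(\xi) \Ra \forall \xi \prec \zeta + \omega^\eta A(\xi)$}
\DisplayProof
\]
The requisite proof transformation is the standard Gentzen bootstrap, executed by recursion on $\eta$: the case $\eta = 0$ uses a single application of progressivity, while the step from $\eta$ to $\eta+1$ uses a nested ordinary induction on $k$ (since $\omega^{\eta+1} = \sup_k \omega^\eta \cdot k$) carried out with the full $\lt$-induction rule of $\fr{\basic}$. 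Being primitive recursive in $\eta$, the transformation yields
\[
\basic \vdash \prt_{\fr{\basic}}([\forall \xi \prec \eta A(\xi) \Ra A(\eta)],\, [\forall \xi \prec \zeta A(\xi) \Ra \forall \xi \prec \zeta + \omega^\eta A(\xi)]).
\]

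An application of ${\sf R}_{\fr{\basic}}$ against the progressivity hypothesis then yields the parameterized jump sequent in $\rtwo{\basic}$. Instantiating $\zeta = 0$ and noting that $\forall \xi \prec 0 A(\xi)$ is trivially derivable, a cut gives $\Ra \forall \xi \prec \omega^\eta A(\xi)$ with $\eta$ free; universal generalization over $\eta$ together with $\omega^\omega = \sup_n \omega^{\bar n}$ in $({\sf OT},\prec)$ then yields $\Ra \forall \xi \prec \omega^\omega A(\xi)$, as desired.

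The hard part is to formalize the Gentzen transformation in $\basic$ while respecting the BDM sequent calculus. The classical hat-trick uses material conditionals whose antecedents $\forall \xi \prec \zeta A(\xi)$ can have non-classical truth values when $A$ contains the truth predicate, so the usual manipulations via modus ponens are unavailable. The remedy is to reformulate each implicational step as a cut between derivable sequents, keeping the entire transformation at the sequent level rather than within a single formula; the nested induction on finite coefficients in the successor case is then discharged by the admissible $\lt$-induction rule of $\fr{\basic}$.
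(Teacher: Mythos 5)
Your proposal is correct and follows essentially the same route as the paper: external induction on the finite exponent $n$ with a nested application of the internal $\lt$-induction rule of ${\sf R}(\basic)$ on the coefficient, then an appeal to the uniformity of this proof transformation to get the $\prt_{{\sf R}(\basic)}$ statement in $\basic$ with the exponent as a free variable, one application of ${\sf R}_{{\sf R}(\basic)}$ to internalize the quantifier over exponents (giving $\omega^\omega$ as the supremum), and finally instantiation of the base point at $0$. The only blemish is notational: you reuse $\eta$ both as the progressivity variable and as the exponent of $\omega$, whereas the latter must be a natural-number variable $x$ so that $\forall x\,\forall\xi\prec\zeta+\omega^x\,A(\xi)$ collapses to $\forall\xi\prec\zeta+\omega^\omega\,A(\xi)$.
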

\begin{proof}
We first prove in ${\sf R}(\basic)$ that, for all $n\in \omega$, 
\begin{equation}\label{tioo}
\AxiomC{$\Gamma,\forall \zeta\prec \eta\;A(\zeta)\Rightarrow A(\eta),\Delta$}
\UnaryInfC{$\Gamma \Ra \forall \zeta\prec \omega^{n}\;A(\zeta), \Delta$}
\DisplayProof
\end{equation}
To prove \eqref{tioo}, we first prove in ${\sf R}(\basic)$, for all $n\in \omega$:
\begin{equation}\label{preli}
\AxiomC{$\forall \zeta\prec \eta\;A(\zeta)\Rightarrow A(\eta)$}
\UnaryInfC{$\forall \zeta\prec \eta\;A(\zeta)\Rightarrow \forall \zeta\prec \eta+\omega^{n}\;A(\zeta)$}
\DisplayProof
\end{equation} 
We reason as follows in ${\sf R}({\basic})$:
\begin{align}
\label{tiuno}&\forall \zeta\prec \eta\;A(\zeta)\Rightarrow A(\eta)\\
%\label{tidue}&\alpha\prec\eta+1\Ra \alpha\prec \eta \vee\alpha=\eta&&\text{ordinal notations}\\
\label{titre}&\forall \zeta\prec \eta\;A(\zeta)\Ra \forall \zeta\prec \eta+\omega^0\;A(\zeta)&&\text{by \eqref{tiuno}}\\
\label{titre}&\forall \zeta\prec \eta\;A(\zeta)\Ra \forall \zeta\prec \eta+\omega^{n}\;A(\zeta)&& \text{external ind. hyp.}\\
\label{titrem}& \forall \zeta\prec \eta+(\omega^n\times x)\,A(\zeta)\Ra \forall \zeta\prec \eta +(\omega^n\times x)+\omega^n \,A(\zeta)&&\text{from \eqref{titre}}\\
\label{tiquattro}&\forall \zeta\prec \eta+(\omega^n\times 0)\;A(\zeta)\Ra \forall x\,\forall \zeta\prec \eta+(\omega^{n}\times x)\;A(\zeta)&&\text{by  ({\sf IND})}\\
\label{tiquattrom}&\forall \zeta\prec \eta\;A(\zeta)\Ra \forall \zeta\prec \eta+\omega^{n+1}\;A(\zeta)
\end{align}
The last two lines give us the induction step and therefore \eqref{preli} by, possibly, a series of cuts.  

Now in $\basic$, 
\begin{equation}
{\sf Pr}^2_{{\sf R}(\basic)}([\forall \zeta\prec \eta\;A(\zeta)\Rightarrow A(\eta)],[\forall \zeta\prec \eta\;A(\zeta)\Rightarrow \forall \zeta\prec \eta+\omega^{x}\;A(\zeta)])
\end{equation} 
Therefore, in ${\sf R}^2(\basic)$,
\begin{equation}\label{tioo1}
\AxiomC{$\forall \zeta\prec \eta\;A(\zeta)\Rightarrow A(\eta)$}
\UnaryInfC{$\forall \zeta\prec \eta\;A(\zeta) \Ra \forall x\,\forall \zeta\prec \eta+\omega^{x}\;A(\zeta)$}
\DisplayProof
\end{equation}
That is 
\begin{equation}\label{tioo1.5}
\AxiomC{$\forall \zeta\prec \eta\;A(\zeta)\Rightarrow A(\eta)$}
\UnaryInfC{$\forall \zeta\prec \eta\;A(\zeta) \Ra \forall \zeta\prec \eta+\omega^{\omega}\;A(\zeta)$}
\DisplayProof
\end{equation} 
But if \eqref{tioo1.5}, by letting $\eta$ to be $0$, we get\begin{equation}\label{tioo2}
\AxiomC{$\Gamma,\forall \zeta\prec \eta\;A(\zeta)\Rightarrow A(\eta),\Delta$}
\UnaryInfC{$ \Gamma\Ra \forall \zeta\prec \omega^{\omega}\;A(\zeta),\Delta$}
\DisplayProof
\end{equation}
\end{proof}
By the proof theoretic analysis of $\pkf$ we know that it can only prove transfinite induction for $\lt$ for ordinals smaller than $\omega^\omega$. But this fact is not dependent in any way on the truth theoretic principles of $\pkf$: already ${\sf PA}_\T$, in fact, proves ${\sf TI}_{\lt}(<\omega^\omega)$. This is also reflected by the fact that Proposition \ref{pr2ti} does not rely on the truth principles of $\tso$. However, by Corollary \ref{pkfinc}, we have:

\begin{corollary}
$\pkf$ is a proper subtheory of ${\sf R}^2(\tso)$.
\end{corollary}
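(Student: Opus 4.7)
The plan is to combine the inclusion already established in Corollary \ref{pkfinc} with the amount of transfinite induction proved in Proposition \ref{pr2ti}, and exploit the known proof-theoretic ceiling of $\pkf$ on transfinite induction for $\lt$. That is, the argument will be a direct separation: I will exhibit a single sentence, namely the instance ${\sf TI}_{\lt}(\omega^\omega)$, which is derivable in ${\sf R}^2(\tso)$ but not in $\pkf$.

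First I would note that the inclusion $\pkf \subseteq {\sf R}^2(\tso)$ is given by Corollary \ref{pkfinc}. To establish properness, I would invoke Proposition \ref{pr2ti}, which yields ${\sf R}^2(\basic) \vdash {\sf TI}_{\lt}(\omega^\omega)$; since $\basic \subseteq \tso$ (and hence the reflection rule ${\sf R}_{\basic}$ is a special case of ${\sf R}_{\tso}$, so ${\sf R}(\basic) \subseteq {\sf R}(\tso)$ and similarly for the second iteration), we also obtain ${\sf R}^2(\tso) \vdash {\sf TI}_{\lt}(\omega^\omega)$.

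Next I would invoke the known proof-theoretic upper bound for $\pkf$ from Halbach and Horsten \cite{hah06}, recalled just before Proposition \ref{pr2ti}: $\pkf$ proves ${\sf TI}_{\lt}(<\omega^\omega)$ but it does \emph{not} prove transfinite induction for $\lt$ at the ordinal $\omega^\omega$ itself. In particular, ${\sf TI}_{\lt}(\omega^\omega)$ is not derivable in $\pkf$. Combining this with the previous paragraph, we have a principle provable in ${\sf R}^2(\tso)$ but not in $\pkf$, and the inclusion is therefore strict.

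The argument is entirely routine given the results already in place; there is no real obstacle. The only mildly delicate point is to ensure that the monotonicity used in passing from ${\sf R}^2(\basic)$ to ${\sf R}^2(\tso)$ is legitimate, which is immediate from the definitions of ${\sf r}(T)$ and ${\sf R}(T)$ as closures of $\basic$ under the corresponding reflection rules for $T$: enlarging $T$ only enlarges the provability predicates $\pr_T$ and $\pr_T^2$, so every admissible application of ${\sf R}_{\basic}$ remains an admissible application of ${\sf R}_{\tso}$. Hence no additional work beyond citing \cite{hah06} and Proposition \ref{pr2ti} is required.
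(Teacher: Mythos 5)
Your argument is exactly the one the paper intends: the inclusion comes from Corollary \ref{pkfinc}, and properness from Proposition \ref{pr2ti} together with the Halbach--Horsten bound that $\pkf$ proves ${\sf TI}_{\lt}$ only for ordinals strictly below $\omega^\omega$. The one point you make more explicit than the paper is the passage from ${\sf R}^2(\basic)$ to ${\sf R}^2(\tso)$; note that provable admissibility of a rule is not monotone in the theory in full generality, but this is harmless here because the concrete proof transformations used in Proposition \ref{pr2ti} are given by explicit elementary procedures that remain provably admissible in any extension of $\basic$.
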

\noindent Transfinite induction up to $\omega^\omega$, however, is clearly not the limit of what we can achieve in ${\sf R}^2(\basic)$.  By using similar methods to the ones employed in Proposition \ref{pr2ti}, and starting from \eqref{tioo1.5}, we can verify that the following rule is admissible in ${\sf R}^2(\basic)$:
	\[
		\AxiomC{$\forall \zeta\prec \eta\;A(\zeta)\Rightarrow A(\eta)$}
		\UnaryInfC{$\forall \zeta\prec \theta\,A(\zeta)\Ra \forall \zeta\prec \theta+\omega^{\omega+k}\, A(\zeta)$}
		\DisplayProof
	\]
\vspace{5pt}
	
\noindent Generalizing this strategy it is possible to show the following:
\begin{lemma}\label{timesn}
 In ${\sf R}^{(n+1)}(\basic)$ the following rule is admissible:
 	\[
		\AxiomC{$\forall \zeta\prec \eta\;A(\zeta)\Rightarrow A(\eta)$}
		\UnaryInfC{$\forall \zeta\prec \theta\,A(\zeta)\Ra \forall \zeta\prec \theta+\omega^{\omega \times n}\, A(\zeta)$}
		\DisplayProof
	\]
\end{lemma}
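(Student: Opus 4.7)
The plan is to prove the lemma by external induction on $n$, following the template of Proposition \ref{pr2ti}. The base case $n=1$ is \eqref{tioo1.5}, which establishes the rule with conclusion $\forall\zeta\prec\theta\;A(\zeta)\Ra\forall\zeta\prec\theta+\omega^{\omega}\;A(\zeta)$ in ${\sf R}^2(\basic)$ (playing the role of $\theta$ with the variable $\eta$ of that earlier derivation).

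For the inductive step, I assume the rule yielding extension by $\omega^{\omega\cdot n}$ is admissible in ${\sf R}^{(n+1)}(\basic)$ and show that the analogous rule with $\omega^{\omega\cdot(n+1)}$ is admissible in ${\sf R}^{(n+2)}(\basic)$. First, by a further external induction on $k\in\omega$, I show that for each fixed $k$ the rule yielding extension by $\omega^{\omega n+k}$ is admissible already in ${\sf R}^{(n+1)}(\basic)$: the case $k=0$ is the inductive hypothesis, and given extension by $\omega^{\omega n+k}$, one use of the internal induction rule ({\sf IND}) iterates this $\omega$ times to yield extension by $\omega^{\omega n+k}\cdot\omega=\omega^{\omega n+k+1}$, exactly as in lines \eqref{titrem}--\eqref{tiquattrom} of the proof of Proposition \ref{pr2ti}. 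Second, since the proof-transformation witnessing the previous step is elementary and uniform in $k$, I formalize it in $\basic$:
\[
\basic\vdash\prt_{{\sf R}^{(n+1)}(\basic)}\bigl([\forall\zeta\prec\eta\;A(\zeta)\Ra A(\eta)],[\forall\zeta\prec\theta\;A(\zeta)\Ra\forall\zeta\prec\theta+\omega^{\omega n+x}\;A(\zeta)]\bigr),
\]
with $x$ a free variable. Third, applying ${\sf R}_{{\sf R}^{(n+1)}(\basic)}$ inside ${\sf R}^{(n+2)}(\basic)$ I obtain, from the progressiveness premise, the sequent
\[
\forall\zeta\prec\theta\;A(\zeta)\Ra\forall x\,\forall\zeta\prec\theta+\omega^{\omega n+x}\;A(\zeta),
\]
and using the arithmetical fact, provable in $\basic$ from the notation system $({\sf OT},\prec)$, that $\omega^{\omega(n+1)}$ is the supremum of the $\omega^{\omega n+x}$ as $x$ ranges over $\omega$, I extract $\forall\zeta\prec\theta+\omega^{\omega(n+1)}\;A(\zeta)$, as required.

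The main obstacle, as already in Proposition \ref{pr2ti}, is the uniform formalization in the second step: one has to verify that the external induction on $k$ produces proof-transformations of elementary growth rate, so that $\basic$ can capture the corresponding provable admissibility schematically in the free variable $x$. Conceptually, each successive $n$ is obtained by exactly the same two-pronged move as in Proposition \ref{pr2ti}: internal {\sf IND} pushes the exponent up by finite increments, then one application of ${\sf R}_T$ universally quantifies over those increments, closing an $\omega$-jump in the exponent and thereby adding a further $\omega$ to the previous index $\omega\cdot n$.
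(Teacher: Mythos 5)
Your proof is correct, and its skeleton (external induction on $n$, base case from Proposition \ref{pr2ti}, an application of ({\sf IND}) inside the theory, then a reflection step) is the intended one; but your inductive step is genuinely different from, and in fact repairs, the one the paper writes down. The paper's proof applies ({\sf IND}) once to the induction hypothesis and concludes that the extension length jumps from $\omega^{\omega\times n}$ to $\omega^{\omega\times n}\times\omega$, which it then identifies with $\omega^{\omega\times(n+1)}$; that identification is an ordinal-arithmetic slip, since $\omega^{\omega\times n}\times\omega=\omega^{\omega\times n+1}$, whereas $\omega^{\omega\times(n+1)}=\omega^{\omega\times n+\omega}$. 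As written, the paper's step never leaves ${\sf R}^{(n+1)}(\basic)$ and only delivers one extra successor in the exponent. Your version inserts exactly the missing layer: an inner external induction on $k$ producing extension by $\omega^{\omega\times n+k}$ for every finite $k$ still inside ${\sf R}^{(n+1)}(\basic)$ (this is the paper's own remark, just before the lemma, that ${\sf R}^2(\basic)$ proves extension by $\omega^{\omega+k}$), followed by a uniform formalization via $\prt_{{\sf R}^{(n+1)}(\basic)}$ and one application of the reflection rule at the next level to quantify over $k$ and reach the supremum $\omega^{\omega\times n+\omega}$. This is the move that actually consumes the step from ${\sf R}^{(n+1)}$ to ${\sf R}^{(n+2)}$ and makes the lemma's indexing ($\omega^{\omega\times n}$ at reflection level $n+1$) come out right, exactly as in the passage from \eqref{tioo1} to \eqref{tioo1.5} in Proposition \ref{pr2ti}. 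You are also right that the only point requiring care is the one the paper itself glosses over, namely that the proof transformations produced by the inner induction on $k$ are of elementary growth rate so that their provable admissibility can be expressed in $\basic$ with $k$ as a free variable.
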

\begin{proof}
By external induction on $n$.
We have established the claim for $n=1$.
Assume that it holds for $n$.
Then we can argue in ${\sf R}^{(n+1)}(\basic)$:
Assume 
\[ \forall \zeta\prec \eta\;A(\zeta)\Rightarrow A(\eta) \] 
then by the induction hypothesis we have 
\[ \forall \zeta\prec \theta\,A(\zeta)\Ra \forall \zeta\prec \theta+\omega^{\omega \times n}\, A(\zeta) \]
and 
\[ \forall \zeta\prec \theta +\omega^{\omega \times n} \times k \,A(\zeta)\Ra \forall \zeta\prec \theta+\omega^{\omega \times n} \times k +\omega^{\omega \times n}\, A(\zeta). \]
By the induction principle ({\sf IND}), applied on  $k$, we obtain 
\[ \forall \zeta\prec \theta +\omega^{\omega \times n} \times 0 \,A(\zeta)\Ra \forall k\, \forall \zeta\prec \theta+\omega^{\omega \times n} \times k \, A(\zeta) \]
giving us
\[ \forall \zeta\prec \theta \,A(\zeta)\Ra  \forall \zeta\prec \theta+\omega^{\omega \times n} \times \omega \, A(\zeta) \]
which is
\[ \forall \zeta\prec \theta \,A(\zeta)\Ra  \forall \zeta\prec \theta+\omega^{\omega \times (n +1) } \, A(\zeta) \]
\end{proof}

Lemma \ref{timesn} immediately entails that ${\sf R}^{n}(\basic)$ proves ${\sf TI}_{\lt}(<\omega^{\omega\times n})$. Therefore, if we reflect on $\tso$ instead of $\basic$, we are able to define in ${\sf R}^{n}(\tso)$ ramified truth predicates for any ordinal smaller than $\omega^{\omega\times n}$ by following the strategy employed by Halbach and Horsten and described on page \pageref{protar}. 

This strategy can be iterated even further. Ideally, we would like to reach, by as little reflection iterations as possible, the amount of transfinite induction for $\lt$ -- and therefore of ramified truth predicates -- that are available in $\kf$,  the classical counterpart of $\pkf$. However,  we conclude this section by providing only a first, and presumably rather inefficient, approximation to this task.

By letting ${\sf R}^{\omega}(\basic) := \bigcup_{n \in \omega} {\sf R}^{n}(\basic)$, a direct consequence of Lemma \ref{timesn} is that 
\begin{corollary}
In ${\sf R}^{\omega}(\basic)$ we have ${\sf TI}_{\lt} ( < \omega^{(\omega^2 )})$
\end{corollary}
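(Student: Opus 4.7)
The plan is to use Lemma \ref{timesn} as a black box for each $n$. Instantiating its admissible rule with $\theta = 0$, we obtain that in ${\sf R}^{n+1}(\basic)$ the inference taking a progressivity premise $\forall \zeta \prec \eta\, A(\zeta) \Rightarrow A(\eta)$ to the conclusion $\Ra \forall \zeta \prec \omega^{\omega \times n}\, A(\zeta)$ is admissible. In other words, for each $n$, ${\sf R}^{n+1}(\basic)$ — and hence a fortiori ${\sf R}^{\omega}(\basic)$ — closes under the rule ${\sf TI}_{\lt}(\omega^{\omega \times n})$.

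To bridge from these individual rules to the blanket claim ${\sf TI}_{\lt}(< \omega^{\omega^2})$, I would appeal to the elementary identity $\omega^{\omega^2} = \sup_{n < \omega} \omega^{\omega \times n}$, which follows from the continuity of ordinal exponentiation at limits together with $\omega \times \omega = \omega^2$. Therefore for every $\alpha \prec \omega^{\omega^2}$ in the notation system $({\sf OT}, \prec)$ there is an $n$ with $\alpha \prec \omega^{\omega \times n}$; since this comparison is between fixed codes and $\prec$ is primitive recursive, the sequent $\Ra \alpha \prec \omega^{\omega \times n}$ is derivable already in $\basic$.

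It then remains to derive ${\sf TI}_{\lt}(\alpha)$ from ${\sf TI}_{\lt}(\omega^{\omega \times n})$ inside ${\sf R}^{n+1}(\basic)$: starting from the progressivity premise for $A$, an application of ${\sf TI}_{\lt}(\omega^{\omega \times n})$ yields $\Ra \forall \zeta \prec \omega^{\omega \times n}\, A(\zeta)$, and then by pure predicate logic together with the $\basic$-provable comparison $\alpha \prec \omega^{\omega \times n}$ we conclude $\Ra \forall \zeta \prec \alpha\, A(\zeta)$, as required. The argument is essentially bookkeeping once Lemma \ref{timesn} is in hand; the only point worth checking is that the chosen notation system for ordinals below $\Gamma_0$ realises $\omega^{\omega^2}$ as the supremum of the codes for $\omega^{\omega \times n}$, but this is a standard feature of any reasonable such system and poses no real obstacle.
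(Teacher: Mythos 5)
Your argument is correct and is essentially the paper's own: instantiate Lemma \ref{timesn} at $\theta=0$ to get ${\sf TI}_{\lt}(\omega^{\omega\times n})$ in ${\sf R}^{n+1}(\basic)$, then use $\omega^{\omega^2}=\sup_n\omega^{\omega\times n}$ and the provable comparison of fixed codes to cover every $\alpha\prec\omega^{\omega^2}$ inside the union ${\sf R}^{\omega}(\basic)$. The paper leaves this as a ``direct consequence''; your write-up just makes the bookkeeping explicit.
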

Therefore the theory ${\sf R}^{\omega}(\tso)$ can define ramified truth predicates indexed by all ordinals $\omega^{\omega\times n}$ for all natural numbers $n$. 
 
 Although $\omega$ may seem to be a natural stopping point, the procedure can be iterated even further into the transfinite. Following a well-known tradition initiated by Feferman in \cite{fef62}, the theories ${\sf R}^n(\basic)$ can  all be shown to be recursively enumerable. Moreover, the notion of being a proof in ${\sf R}^n(\basic)$ is recursive. We can then find a primitive recursive function enumerating all those proof predicates. By employing the recursion theorem, therefore, we can find an index for this enumeration that can be used to formalize, via a recursive predicate, the notion of being a proof employing rules proper of one of the theories ${\sf R}^n(\basic)$. This, however, suffices to formulate the notion of being a proof in ${\sf R}^{\omega}(\basic)$: clearly,  similar procedure can be extended at least to ordinals smaller than $\varepsilon_0$.
 
But once a recursive formalization of transfinite iterations of our reflection rules is available, it becomes clear that enough iterations of reflection over $\basic$ will lead us to the amount of transfinite induction for $\lt$ available in $\kf$. By letting $\omega_0:=1$, and $\omega_{n+1}:=\omega^{\omega_n}$, we have, rather unsurprisingly,  
	\begin{observation}
		${\sf R}^{\omega_n+1}(\basic)\vdash {\sf TI}_{\mc L_\T}(\omega_{n})$
	\end{observation}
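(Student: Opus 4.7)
The plan is to proceed by external (metatheoretic) transfinite induction on $n$, extending the bootstrapping technique of Lemma \ref{timesn} into the transfinite hierarchy of reflection theories whose recursion-theoretic formalization is sketched in the preceding paragraph.

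For the base case $n=0$, we have $\omega_0 = 1$ and the claim ${\sf R}^{2}(\basic) \vdash {\sf TI}_{\mc L_\T}(1)$ is immediate; indeed, Proposition \ref{pr2ti} already yields the much stronger ${\sf TI}_{\mc L_\T}(\omega^\omega)$. For the inductive step, suppose ${\sf R}^{\omega_n+1}(\basic) \vdash {\sf TI}_{\mc L_\T}(\omega_n)$; we want ${\sf R}^{\omega_{n+1}+1}(\basic) \vdash {\sf TI}_{\mc L_\T}(\omega_{n+1})$.

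The key tool is the following \emph{step lemma} distilled from the proof of Lemma \ref{timesn}: if ${\sf R}^\beta(\basic)$ admits the rule ``from the progressiveness of $A$, infer $\forall \zeta \prec \theta\, A(\zeta) \Ra \forall \zeta \prec \theta + \gamma\, A(\zeta)$'' (briefly, \emph{can add $\gamma$}), then ${\sf R}^{\beta+1}(\basic)$ admits the stronger rule \emph{can add $\gamma \cdot \omega$}. The step lemma is proved by formalizing the $\beta$-level admissibility via $\prt_{{\sf R}^\beta(\basic)}$, promoting the rule by one application of ${\sf R}_{{\sf R}^\beta(\basic)}$, and then applying $(\mathsf{IND})$ to the auxiliary formula $B(k) := \forall \zeta \prec \theta + \gamma \cdot k\, A(\zeta)$, just as in the inductive step of Lemma \ref{timesn}. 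Starting from the inductive hypothesis at stage $\omega_n + 1$ (which supplies \emph{can add $\omega_n$}) and iterating the step lemma along the stages up to $\omega_{n+1} + 1$---taking unions $\bigcup_{\beta < \lambda} {\sf R}^\beta(\basic)$ at each limit $\lambda$ via the recursion-theoretic formalization of reflection---one reaches the rule \emph{can add $\omega_{n+1}$}, whence instantiating with $\theta := 0$ delivers ${\sf TI}_{\mc L_\T}(\omega_{n+1})$, as required.

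The main obstacle is managing the limit stages of the transfinite iteration. Successor stages are direct applications of the step lemma, but at a limit $\lambda$ we need the recursion-theoretic formalization of the reflection hierarchy to ensure that $\prt_{{\sf R}^\lambda(\basic)}$ is well-defined and that the admissibility facts from the approximating stages persist uniformly. A further subtlety lies in the ordinal arithmetic: while a careful count shows that ``can add $\omega_{n+1}$'' is actually reached well before stage $\omega_{n+1} + 1$, verifying the loose bound stated in the observation still requires keeping track of the growth rate of the ordinals obtained by multiplying by $\omega$ through successor steps and taking suprema at limits.
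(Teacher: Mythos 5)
The paper states this Observation without any proof at all --- it is offered as an ``unsurprising'' consequence of Lemma \ref{timesn} together with the preceding sketch of how the progression ${\sf R}^\alpha(\basic)$ is formalized into the transfinite --- and your argument (external induction on $n$, a step lemma turning ``can add $\gamma$'' at level $\beta$ into ``can add $\gamma\cdot\omega$'' at level $\beta+1$ via $\prt$, one application of the generalized reflection rule, and $(\mathsf{IND})$, with unions at limits) is exactly the intended extension of the mechanism of Proposition \ref{pr2ti} and Lemma \ref{timesn}, and your ordinal bookkeeping ($\omega_n\cdot\omega^\beta=\omega^{\omega_{n-1}+\beta}$ reaches $\omega_{n+1}$ already by stage roughly $\omega_n\cdot 2$) is correct. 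The only point that genuinely exceeds what the paper makes explicit is the one you flag yourself: that the recursive formalization of ${\sf R}^\lambda(\basic)$ at limits makes the lower-stage admissibility facts \emph{provably} (in $\basic$) persist, which is needed for $\prt_{{\sf R}^\lambda(\basic)}$ to be usable; the paper leaves this at the level of an appeal to Feferman-style progressions, so your proposal is, if anything, more careful than the source.
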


%%%%%%%%%%%%%%%%%%%%

\section{Conclusion}

\noindent Starting with principles that are minimally constitutive of the notion of truth, such as the initial sequents of the theory $\tso$, we have investigated the result of iterating reflection rules  over them. A similar project, in the context of classical logic and therefore without the basic principles of $\tso$, has been recently pursued by Horsten and Leigh \cite{hol15}. We claim that for two reasons our nonclassical setting provides a more coherent framework for such a project for two main reasons. First, in a \emph{classical} setting the interderivability of $A$ and $\T\ulc A\urc$ (which is the defining characteristic of $\tso$) cannot be consistently maintained. Second, following a theme by Kreisel, the global reflection ${\sf GRF}_T$ for a theory $T$ is the \emph{intended} soundness extension of $T$. Other  proof theoretic reflection principles, including the uniform reflection principle ${\sf RFN}_T$, are only justified by an appeal to global reflection. However, as shown in \S2.3, in classical axiomatizations of Kripke's fixed point constructions, the use of the global reflection principle is at odds with the overall strategy of iterating reflection rules. 

One way to understand the results of this article is by asking which statements $\tso$ and the result of iterating reflection rules over it can prove to be true, i.e., by considering their provable sequents of the form $\Ra \T\ulc A\urc$ for $A$ in $\lt$ or, in short, at their {\it truth theorems}. $\bdm$ in itself has no theorems at all. When initial sequents for identity are added to it as well as arithmetical initial sequents, even if the truth predicate is in the signature of the theory, one only obtains arithmetical theorems but no truth theorems. $\tso$, by contrast, does prove truth theorems, but only truth theorems of the form 
\[
	\underbrace{\T\ldots\T}_{\text{n-times}}\ulc A\urc
\]
where $A$ is an arithmetical theorem of $\basic$. This shortcoming of $\tso$ is accompanied by the lack of other desirable properties of the theory, such as full compositionality (see again \S2.3). By adding a uniform or global \emph{reflection rule} to $\tso$, we restore our full capability of reasoning inductively with the truth predicate, and several compositional truth sequents. Full compositionality, together with the possibility of establishing theorems of the form
\[
	\underbrace{\T\ldots\T}_{\text{$\omega^{\omega+n}$-times}}\ulc A\urc
\]
for $A$ again an arithmetical theorem of $\basic$, is reached when we consider the theory ${\sf R}^2(\tso)$, i.e., via a further iteration of the generalized reflection rule ${\sf R}_T$ over $\tso$. At this stage, we already recapture and surpass all  truth theorems of the full compositional theory $\pkf$. A natural goal for the process of iteration may be to reach the truth theorems of the classical theory $\kf$ (or equivalently, $\pkf+{\sf TI}_{\lt}(<\varepsilon_0)$, as shown in \cite{nic17}). This can be achieved via suitable transfinite progressions of theories obtained by reflection over $\tso$. 

From a semantic perspective, there is a tight match between the truth theorems of our theories and the levels of the construction of the minimal fixed point of Kripke's construction from \cite{kri75}. By extending $\tso$ with an $\omega$-rule, this connection can be made explicit: the theorems of $\tso$ plus the $\omega$-rule are exactly the $\lt$-sentences that are in the extension of the truth predicate in the minimal fixed point of Kripke's theory (see \cite{fig16} for a recent proof). Uniform reflection principles are recursive approximations of the $\omega$-rule. Therefore iterations of reflection, and the corresponding truth theorems of the resulting theories, can be seen as approximations to the full $\omega$-rule added to $\tso$ as they represent initial stages of the construction of the minimal fixed point. It is also clear that all the theories that we have considered are {\it internal} axiomatizations of Kripke fixed points. Therefore the hierarchy that we have studied can also be seen as an attempt to capture, via recursively axiomatized theories, the set of grounded sentences first isolated by Kripke.

Nonetheless our work leaves many open questions and possibilities for improvement: from a technical point of view,  a sharper proof-theoretic analysis of the theories obtained by iterated reflection would be desirable to see clearly, for instance, how much one can obtain with finite iterations of reflection. Moreover, it would be interesting to see whether the reflection rules can be strengthened via `higher-order' reflection rules in such a way that only finitely many iterations of them could suffice to reach the truth theorems of $\kf$. Finally, there remains the question whether the gap between ${\sf TI}_{\lt}(<\omega^\omega)$ and ${\sf TI}_{\lt}(<\varepsilon_0)$ -- which is determined by whether {\sf PA} in the signature of $\lt$ is formulated in {\sf BDM} or classical logic respectively -- can be closed by supplementing $\bdm$ with a suitable conditional in such a way that the conceptual advantages of the treatment of truth in $\tso$ are preserved. 

 \bibliographystyle{abbrv}
\bibliography{litHB}

\end{document}